\newcommand{\gauss}[2]{\genfrac{[}{]}{0pt}{}{#1}{#2}}
\newcommand{\gaussm}[1]{[#1]}
\newcommand{\lowerbndqinlem}{3}
\newcommand{\bbF}{\mathbb{F}}
\newcommand{\bbR}{\mathbb{R}}
\newcommand{\scrF}{\mathcal{F}}
\newcommand{\scrP}{\mathcal{P}}
\newcommand{\scrH}{\mathcal{H}}
\newtheorem{theorem}{Theorem}
\newtheorem{lemma}[theorem]{Lemma}
\newtheorem{corollary}[theorem]{Corollary}
\newtheorem{conjecture}[theorem]{Conjecture}
\newtheorem{definition}[theorem]{Definition}
\numberwithin{theorem}{section}
\numberwithin{equation}{section}
\begin{document}



\title{A Note on the Manickam-Mikl\'{o}s-Singhi Conjecture for Vector Spaces}

\subjclass[2000]{05B25, 51E20, 52C10}

\author{Ferdinand Ihringer}

\address{ %
Mathematisches Institut,
Justus Liebig University Giessen,
Arndtstra\ss{}e 2,
35392 Giessen,
Germany.}
\email{Ferdinand.Ihringer@math.uni-giessen.de}

\begin{abstract}
  Let $V$ be an $n$-dimensional vector space over a finite field with $q$ elements. 
  Define a real-valued weight 
  function on the $1$-dimensional subspaces of $V$ such that the sum of all 
  weights is zero. Let the weight of a subspace $S$ be the sum of the weights
  of the $1$-dimensional subspaces contained in $S$. In 1988 Manickam and Singhi 
  conjectured that if $n \geq 4k$, then the number of $k$-dimensional subspaces with nonnegative weight
  is at least the number of $k$-dimensional subspaces on a fixed $1$-dimensional
  subspace.
  
  Recently, Chowdhury, Huang, Sarkis, Shahriari, and Sudakov proved the conjecture of Manickam 
  and Singhi for $n \geq 3k$. We modify the technique used by Chowdhury, Sarkis, and Shahriari 
  to prove the conjecture for $n \geq 2k$ if $q$ is large. Furthermore, if
  equality holds and $n \geq 2k+1$, then the set of $k$-dimensional subspaces with nonnegative weight 
  is the set of all $k$-dimensional subspaces on a fixed $1$-dimensional subspace.
  With the exception of small $q$, this result is the strongest possible, since the conjecture 
  is no longer true for all $n$ and $k$ with $k < n < 2k$.
  
  \bigskip\noindent \textbf{Keywords:} MMS conjecture; association scheme; EKR theorem
\end{abstract}

\maketitle

\section{Introduction}

In 1984 Thomas Bier introduced the \textit{distribution invariant} of an
association scheme to study certain topological questions related to
so-called net graphs \cite{Bier1984}. This concept was generalized by Bier and Delsarte \cite{Bier1988}, 
where they noticed close connections between the distribution invariant and
other difficult combinatorial problems such as the MDS conjecture,
Erd\H{o}s-Ko-Rado theorems, and various covering problems.  In 1987 Bier and Manickam investigated
the so-called \textit{first distribution invariant} of the 
Johnson scheme \cite{Bier1987}.
In 1988 Manickam, Mikl{\'o}s, and Singhi \cite{Manickam1988, Manickam1988a}
investigated the first distribution invariant of the Johnson scheme further. 
The problem can be paraphrased as follows. Let 
$X$ be a finite set with $n$ elements. Let $f: X \rightarrow \bbR$ be a weight function with 
$\sum_{x \in X} f(x) = 0$. What is the minimum number of $k$-element subsets $S$ 
such that $\sum_{x \in S} f(x)$ is nonnegative? They conjectured the following.

\begin{conjecture}[{\cite[Conjecture 1.4]{Manickam1988a}}]\label{conj_mms_sets}
  Let $n \geq 4k$.
  The number of $k$-element subsets $S$ such that $\sum_{x \in S} f(x)$ is 
  nonnegative is at least $\binom{n-1}{k-1}$.
\end{conjecture}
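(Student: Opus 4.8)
The plan is to prove the bound first in the clean case $k \mid n$, where it holds with equality and without any hypothesis relating $n$ and $k$, and then to bootstrap to the full range $n \ge 4k$.

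\emph{Normal form.} By relabeling I may assume $f(x_1) \ge f(x_2) \ge \cdots \ge f(x_n)$; since the weights sum to $0$, this forces $f(x_1) \ge 0 \ge f(x_n)$. The configuration to beat is the weighting $f(x_1) = n-1$, $f(x_i) = -1$ for $i > 1$, whose nonnegative $k$-sets are precisely the $\binom{n-1}{k-1}$ sets containing $x_1$.

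\emph{The divisible case.} Suppose $k \mid n$ and set $t = n/k$. Consider all $N = n!/((k!)^{t}\, t!)$ partitions of $X$ into $t$ blocks of size $k$. In each such partition the $t$ block-weights sum to $0$, so at least one block has nonnegative weight. Now double count the pairs $(\mathcal P, B)$ where $\mathcal P$ is a partition and $B$ a nonnegative block of $\mathcal P$: each of the $N$ partitions contributes at least one, while each fixed $k$-set occurs as a block of exactly $M = (n-k)!/((k!)^{t-1}(t-1)!)$ partitions. Hence the number of nonnegative $k$-sets is at least $N/M$, and a short computation gives $N/M = \binom{n-1}{k-1}$. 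This settles the conjecture whenever $k$ divides $n$.

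\emph{Bootstrapping, and the main obstacle.} For general $n$ I would try to reduce to the divisible case by averaging the partition argument over sub-ground-sets $Y \subseteq X$ with $|Y|$ a multiple of $k$: restricted to such a $Y$, whenever $\sum_{y \in Y} f(y) \le 0$ a $k$-block partition of $Y$ still forces a nonnegative $k$-set inside $Y$, and since $\mathbb{E}_Y\bigl[\sum_{y \in Y} f(y)\bigr] = 0$ the ``bad'' sets $Y$ with positive total weight are in the minority. Combining the nonnegative $k$-sets obtained this way with those forced through the largest-weight elements from the normal-form step should push the count up to $\binom{n-1}{k-1}$. The genuine difficulty is quantitative: a naive execution of this scheme requires $n$ of order $k^2$, because a single bad $Y$ contributes nothing and the error terms dominate when $t = n/k$ is small. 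Getting all the way down to the conjectured threshold $n \ge 4k$ demands either a far tighter accounting of the bad sub-ground-sets or a more clever mechanism --- for instance, a direct injection from negative $k$-sets avoiding $x_1$ into nonnegative $k$-sets containing $x_1$, exploiting that $f(x_1)$ is the single largest weight --- and this is exactly where the partial results cited in the introduction, and the vector-space analogue treated below, require substantial additional ideas. I expect this step to be the crux of the argument.
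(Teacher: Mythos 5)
The statement you are asked to prove is labelled a \emph{conjecture} in the paper: it is the Manickam--Mikl\'os--Singhi conjecture for sets, quoted from \cite{Manickam1988a}, and the paper contains no proof of it. Indeed no proof for the full range $n \ge 4k$ exists in the literature cited here; the strongest general results mentioned in the introduction are $n \ge 8k^2$ (Chowdhury, Sarkis, Shahriari) and $n > 10^{46}k$ (Pokrovskiy). So there is nothing in the paper to compare your argument against, and a complete proof would be a major new result rather than a routine verification.

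Your divisible case is correct: double counting pairs $(\mathcal P, B)$ over all $N = n!/((k!)^t\,t!)$ partitions into $t = n/k$ blocks of size $k$, each partition contributing at least one nonnegative block and each $k$-set lying in $M = (n-k)!/((k!)^{t-1}(t-1)!)$ partitions, gives at least $N/M = \binom{n}{k}\cdot \tfrac{k}{n} = \binom{n-1}{k-1}$ nonnegative $k$-sets. This is exactly the classical Manickam--Singhi argument and is already known. The gap is everything after that. Your ``bootstrapping'' paragraph is a description of the difficulty, not an argument: you correctly observe that averaging the partition argument over sub-ground-sets $Y$ with $k \mid |Y|$ loses control when $\sum_{y\in Y} f(y) > 0$, and that a naive accounting of these bad $Y$ only yields a bound of order $n \gtrsim k^2$; but you supply no mechanism that closes the gap down to $4k$. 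The alternative injection idea (mapping negative $k$-sets avoiding $x_1$ to nonnegative $k$-sets containing $x_1$) is only named, not constructed, and constructing such an injection in the regime $4k \le n \ll k^2$ is precisely the open content of the conjecture. As it stands, the proposal establishes the statement only when $k$ divides $n$.
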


The Erd\H{o}s-Ko-Rado theorem for sets states that for $n \geq 2k$
the maximum number of $k$-element subsets with pairwise non-trivial 
intersections is at most $\binom{n-1}{k-1}$ \cite{ErdHos1961, Wilson1984}, so there seems to be a connection 
between these two problems.
We shall mention some important works on this conjecture
such as a result by Alon, Huang, 
and Sudakov \cite{Alon2012} who obtained the first 
polynomial bound on $n$ with $n \geq \min\{ 33k^2, 2k^3\}$, 
a result by 
Pokrovskiy \cite{Pokrovskiy2013} who seems to have 
obtained
the first linear bound on $k$ with $n > 10^{46}k$, 
and a result by Chowdhury, Sarkis, and Shahriari \cite{Chowdhury2013}
who did prove the conjecture for $n \geq 8k^2$ and also obtained a result on the analog
problem on vector spaces.

Manickam and Singhi also considered this analog problem for vector spaces \cite{Manickam1988a}.
In this case the problem can be paraphrased as follows. Let 
$V$ be a finite $n$-dimensional vector space. 
Let $\scrP$ be the set of $1$-dimensional subspaces.
Let $f: \scrP \rightarrow \bbR$ be a weight function with 
$\sum_{P \in \scrP} f(P) = 0$. What is the minimum number of $k$-dimensional subspaces $S$ 
such that $\sum_{P \in S} f(P)$ is nonnegative? They conjectured the following.

\begin{conjecture}[{\cite[Conjecture 1.4]{Manickam1988a}}]\label{conj_mms_vs}
  Let $n \geq 4k$.
  The number of $k$-dimen\-sional subspaces $S$ such that $\sum_{P \in \scrP} f(P)$ is 
  nonnegative is at least the number of $k$-dimen\-sional subspaces on a
  fixed $1$-dimensional subspace.
\end{conjecture}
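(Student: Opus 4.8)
The plan is to prove the strengthening announced in the abstract: for $n\ge 2k$ and $q$ large, every zero-sum weight $f\colon\scrP\to\bbR$ admits at least $\gauss{n-1}{k-1}$ subspaces $S$ of dimension $k$ with $f(S):=\sum_{P\le S}f(P)\ge 0$, and when $n\ge 2k+1$ equality holds only for the family of all $k$-subspaces through a fixed $1$-dimensional subspace; this contains Conjecture~\ref{conj_mms_vs} in that range. The backbone is an averaging argument over spreads, which settles the case $k\mid n$ for every $q$ and in particular the boundary case $n=2k$. When $k\mid n$, $PG(n-1,q)$ has $k$-spreads: partitions of $\scrP$ into $\tfrac{q^n-1}{q^k-1}$ pairwise trivially intersecting $k$-subspaces. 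In any spread $\Sigma$ one has $\sum_{S\in\Sigma}f(S)=\sum_{P\in\scrP}f(P)=0$, so some member of $\Sigma$ has nonnegative weight; since $GL(n,q)$ is transitive on $k$-subspaces, every $k$-subspace lies in equally many spreads, so counting the pairs consisting of a spread and a nonnegative-weight member of it gives at least $\gauss{n}{k}\big/\tfrac{q^n-1}{q^k-1}=\gauss{n-1}{k-1}$ subspaces of nonnegative weight, using $\gauss{n}{k}=\gauss{n-1}{k-1}\cdot\tfrac{q^n-1}{q^k-1}$. Equality forces each spread to meet the nonnegative family in exactly one subspace --- a property enjoyed by every star, which explains the extremal configuration.

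For the remaining range $2k<n<3k$, in which $PG(n-1,q)$ carries no $k$-spread, I would argue by contradiction in the style of Chowdhury, Sarkis, and Shahriari. Assume the family $\scrH$ of nonnegative-weight $k$-subspaces has $|\scrH|<\gauss{n-1}{k-1}$, and let $\scrS$ be the family of negative-weight $k$-subspaces, so $|\scrS|>\gauss{n}{k}-\gauss{n-1}{k-1}$; for $n\ge 2k$ this already exceeds the Erd\H{o}s--Ko--Rado bound $\gauss{n-1}{k-1}$ for intersecting families of $k$-subspaces, so $\scrS$ is very far from intersecting. The aim is to show that such a large $\scrS$ must contain a $GL(n,q)$-symmetric covering configuration serving as a substitute for a spread: averaging the spread argument over all pairs $(W,\Sigma)$ with $\dim W=k\lfloor n/k\rfloor$ and $\Sigma$ a $k$-spread of $W$, each point of $\scrP$ lies in equally many of the $k$-subspaces that occur, so the average of $\sum_{S\in\Sigma}f(S)$ vanishes and no such $\Sigma$ can lie entirely in $\scrS$. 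This reduction is where large $q$ enters: the pertinent ``spread-blocking'' threshold, and the extremal values in the $q$-analogues of the Erd\H{o}s--Ko--Rado, Hilton--Milner, and Erd\H{o}s matching inequalities, become well separated for large $q$, with a star of size $\gauss{n-1}{k-1}$ as the unique small blocking configuration, so the lower-order error terms produced by the averaging are absorbed. I expect the main obstacle to be exactly this bookkeeping --- bounding the contribution of the subspaces $W$ of negative weight and of the subspaces that straddle a given $W$, uniformly in $n$ and $k$ --- which I would handle by a recursive appeal to the statement in smaller dimension, applied to the weights induced on subspaces of $V$ of dimension $j<k$ (each pair $(n,j)$ still satisfying $n\ge 2j$).

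For the equality statement, with $n\ge 2k+1$ and $|\scrH|=\gauss{n-1}{k-1}$, I would trace equality back through the chain: it forces $\scrS$ to meet every covering configuration extremally, hence its complement to be a minimum blocking set, and by the uniqueness part of the $q$-analogue of the Erd\H{o}s--Ko--Rado theorem --- which requires $n\ge 2k+1$, and is why $n=2k$ must be excluded here --- that minimum blocking set is a star. Therefore $\scrH$ consists of all $k$-subspaces through one fixed $1$-dimensional subspace. As a consistency check, this is realised by the weighting that gives a single point a large positive weight and every other point a small equal negative weight, which attains equality for all $n\ge k$ and all $q$, and which should --- up to scaling and the choice of point --- be the unique extremal weighting once $n\ge 2k+1$.
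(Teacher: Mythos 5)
Your divisible-case argument (averaging over $k$-spreads when $k\mid n$) is correct and is exactly how Manickam and Singhi settled that case; it covers $n=2k,3k,4k,\dots$. But it does not combine with your second case to give the statement: your split is ``$k\mid n$'' versus ``$2k<n<3k$'', which omits for instance $n=3k+1$ and hence most of the range $n\ge 4k$ with $k\nmid n$ that the conjecture actually concerns; moreover the conjecture is asserted for every $q$, while your second argument is only claimed for large $q$.

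The more serious problem is that the second case is a plan, not a proof, and its pivotal claim is false. For a $k$-spread $\Sigma$ of a proper subspace $W$ of dimension $2k$ one has $\sum_{S\in\Sigma}f(S)=f(W)$, which is negative whenever $W$ has negative weight, and then every member of $\Sigma$ can lie in $\scrS$; so ``no such $\Sigma$ can lie entirely in $\scrS$'' does not follow from the averaging. Controlling the negative-weight $W$'s is not bookkeeping to be absorbed for large $q$ --- it is the entire difficulty --- and the appeals to Hilton--Milner and matching analogues and to ``a recursive appeal in smaller dimension'' do not supply an argument. The paper's mechanism is genuinely different: the weight vector $b=(b_S)$ is an eigenvector of the Grassmann-scheme adjacency matrices, so $\sum_{\dim(S\cap C)=1}b_S=\lambda\,b_C$ with $\lambda$ close to $\gauss{n-1}{k-1}$ (Lemma \ref{lem_ev_A1}); iterating this from a maximum-weight subspace produces a bad configuration of $x$ nonnegative subspaces of near-maximal weight pairwise meeting in distinct points (Lemmas \ref{lem_A1_intersection}--\ref{lem_there_exists_bad_configuration}); Lemma \ref{lem_bad_config_intersection_bound} bounds the subspaces meeting such a configuration badly; and Beutelspacher's partial spreads (Lemma \ref{lem_imp_section4_lemma}) cap the number of nonnegative subspaces through any fixed point, yielding the contradiction. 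None of this, nor a workable substitute, appears in your sketch, and the equality/uniqueness discussion inherits the same gap.
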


Manickam and Singhi were able to prove their conjecture if $k$ divides $n$
(which includes $n = k, 2k, 3k$) \cite{Manickam1988a}.
Recently, Chowdhury, Huang, Sarkis, Shahriari, and Sudakov showed that Conjecture \ref{conj_mms_vs}
holds for $n \geq 3k$ \cite{Huang2014, Chowdhury2013}. Hence, technically Conjecture \ref{conj_mms_vs} is proven,
but all known counterexamples satisfy $k < n < 2k$, so it seems reasonable to 
conjecture that only $n \geq 2k$ is necessary. We shall extend the technique 
used by Chowdhury, Sarkis, and Shahriari to show the conjecture for $n \geq 2k$ and (very) large $q$.

\begin{theorem}\label{thm_main}
  Let $V$ be an $n$-dimensional vector space over a finite field $\bbF_q$.
  Let $\scrP$ the set of $1$-dimensional subspaces of $V$.
  Let $f: \scrP \rightarrow \bbR$ be a weighting of the $1$-dimensional subspaces 
  such that $\sum_{P \in \scrP} f(P) = 0$.
  Let $x$ be an integer with $2 \leq x \leq k$.
  If one of the following conditions is satisfied, then there are at least $\gauss{n-1}{k-1}$
  $k$-dimensional subspaces with nonnegative weights.
  \begin{enumerate}[(a)]
   \item $(x-1) n \geq (2x-1) k - x + 2$, $n \geq 2k+2$, and $q \geq (x-1)! \cdot 2^{x+2}$,
   \item $(x-1) n \geq (2x-1) k - x + 1$, $n \geq 2k+1$, and $q \geq (x-1)! \cdot 2^{2x+1}$,
   \item $n \geq 3k$, and $q \geq 2$ or $n = k$, and $q \geq 2$.
  \end{enumerate}
  If equality holds and $n \geq 2k+1$, then the set of nonnegative $k$-dimensional subspaces
  is the set of all $k$-dimensional subspaces on a fixed $1$-dimensional subspace.
\end{theorem}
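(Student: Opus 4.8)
The plan is to derive the equality statement from the inequality part of the theorem together with the $q$-analogue of the Erd\H{o}s--Ko--Rado theorem, proceeding in two steps.

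Throughout assume $n \geq 2k+1$ and that exactly $\gauss{n-1}{k-1}$ of the $k$-dimensional subspaces carry nonnegative weight; write $\mathcal{G}$ for this family, so $|\mathcal{G}| = \gauss{n-1}{k-1}$. \emph{Step 1: reduce to showing that $\mathcal{G}$ is intersecting.} Recall the Erd\H{o}s--Ko--Rado theorem for vector spaces: for $n \geq 2k+1$, a family of $k$-dimensional subspaces that pairwise meet non-trivially has size at most $\gauss{n-1}{k-1}$, and the only families of that size are the stars, i.e.\ the families of all $k$-spaces through a fixed $1$-space. Hence, once $\mathcal{G}$ is known to be intersecting, the equality $|\mathcal{G}| = \gauss{n-1}{k-1}$ forces $\mathcal{G}$ to be a star, which is the assertion. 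This is also the one place where $n \geq 2k+1$, rather than merely $n \geq 2k$, is indispensable: when $n = 2k$ the weighting that is constant equal to $a>0$ on the points of a fixed hyperplane and constant (and negative, to make the total zero) off it has exactly the $k$-subspaces of that hyperplane as its nonnegative subspaces, and there are $\gauss{2k-1}{k-1}=\gauss{n-1}{k-1}$ of them; so for $n=2k$ equality can hold without $\mathcal{G}$ being a star.

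\emph{Step 2: show that $\mathcal{G}$ is intersecting.} Suppose not, and pick $S_1, S_2 \in \mathcal{G}$ with $S_1 \cap S_2 = \{0\}$. Then $W := S_1 \oplus S_2$ has dimension $2k$, and since $n \geq 2k+1$ it is a proper subspace of $V$. The inequality $|\mathcal{G}| \geq \gauss{n-1}{k-1}$ is proved by a double count over a suitable $\mathrm{GL}(V)$-invariant family of \emph{test configurations} of $k$-subspaces --- honest spreads of $V$ when $k \mid n$, and the more elaborate configurations governed by the parameter $x$ otherwise --- each of which is guaranteed to contain at least one nonnegative member; since every $k$-subspace lies in the same number of test configurations, the count yields $|\mathcal{G}| \geq \gauss{n-1}{k-1}$ once the divisibility error terms are dealt with, and it is exactly here that the parameter $x$ and the largeness of $q$ are used. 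Under the equality hypothesis, therefore, every test configuration contains \emph{exactly} one nonnegative member. On the other hand $S_1$ and $S_2$ are complementary inside $W$, so the partial spread $\{S_1,S_2\}$ extends to a test configuration of $V$; that configuration then contains two nonnegative members, a contradiction. Hence $\mathcal{G}$ is intersecting, and Step 1 completes the proof.

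I expect the main obstacle to be Step 2. One has to (i) make the test configurations of parts (a) and (b) completely explicit in terms of $x$; (ii) show that every pair of disjoint $k$-subspaces can be completed to one of these configurations --- for spreads this is a standard extension fact, but for the parameter-$x$ configurations it must be checked, and it is here that $q$ has to exceed the stated thresholds so that the configurations exist with the required parameters; and (iii) verify that the gain of ``one extra nonnegative member in a single configuration'' really does push the double count strictly above $\gauss{n-1}{k-1}$, i.e.\ that it is not absorbed by the lower-order corrections forced by $k \nmid n$. For condition (c) the whole scheme runs with $x = 2$, and when $k \mid n$ it reduces to the classical spread argument, where every step is an equality and the slack produced by a complementary pair is transparent.
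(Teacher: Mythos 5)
Your proposal does not establish the theorem: the entire argument rests on a family of ``test configurations'' that you postulate but never construct, and no such family is available in the regime the theorem is actually about. For $k \mid n$ the configurations are spreads and the double count is the classical Manickam--Singhi argument, but that only covers case (c). In the new cases (a) and (b) one has $2k < n < 3k$ with $k \nmid n$, and there is no known $\mathrm{GL}(V)$-invariant family of configurations of $k$-spaces, each guaranteed (by a zero-sum identity or otherwise) to contain a nonnegative member, whose double count yields $\gauss{n-1}{k-1}$; producing one is essentially the open problem, not a checkable detail. The parameter $x$ does not index such configurations. In the actual proof it is the size of a \emph{bad configuration}: a set of $x$ $k$-spaces pairwise meeting in exactly a $1$-space with trivial triple intersections, whose members are nonnegative by construction (they are among the highest-weight subspaces meeting a maximum-weight subspace $A$ in a point), not by an averaging identity. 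The proof is by contradiction: assuming at most $\gauss{n-1}{k-1}$ nonnegative $k$-spaces and that no $1$-space lies on $\gauss{n-1}{k-1}$ of them, one (i) uses an averaging bound via Beutelspacher's partial spreads to show that no $1$-space lies on more than $\frac{2}{q}\gauss{n-1}{k-1}$ nonnegative $k$-spaces, (ii) uses the eigenvalues of the Grassmann scheme (the weight vector is an eigenvector of every distance-$i$ adjacency matrix) to build a bad configuration of size $x$ consisting of nonnegative subspaces, and (iii) shows by a counting lemma that some pairwise-intersection point of that configuration must then lie on more than $\frac{2}{q}\gauss{n-1}{k-1}$ nonnegative subspaces, contradicting (i). The arithmetic conditions relating $n$, $k$, $x$, and $q$ are exactly what makes (iii) beat (i).

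Your Step 1 --- deriving the equality characterization from the Erd\H{o}s--Ko--Rado theorem for vector spaces once the extremal family is known to be intersecting --- is a sensible alternative route for the uniqueness statement, and your $n=2k$ example showing why $n \geq 2k+1$ is needed is the right one. But as written it inherits the gap: your only argument that the extremal family is intersecting again passes through the nonexistent test configurations. (In the paper the uniqueness comes for free: the contradiction argument shows that, given at most $\gauss{n-1}{k-1}$ nonnegative $k$-spaces, some $1$-space must lie on $\gauss{n-1}{k-1}$ of them, which in the case of equality is precisely the star structure.)
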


This implies the following for $x = k$.

\begin{corollary}\label{cor_main}
  Let $V$ be an $n$-dimensional vector space over a finite field $\bbF_q$.
  Let $\scrP$ the set of $1$-dimensional subspaces of $V$.
  Let $f: \scrP \rightarrow \bbR$ be a weighting of the $1$-dimensional subspaces 
  such that $\sum_{P \in \scrP} f(P) = 0$.
  Let $k \geq 2$. There exists a $q_0 \geq 2$ such that the following holds.
  If $n \geq 2k$, and $q \geq q_0$, then 
  there are at least $\gauss{n-1}{k-1}$
  $k$-dimensional subspaces with nonnegative weight.
  If equality holds and $n\geq 2k+1$, then the set of nonnegative $k$-dimensional subspaces
  is the set of all $k$-dimensional subspaces on a fixed $1$-dimensional subspace.
\end{corollary}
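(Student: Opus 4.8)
The plan is to read off Corollary~\ref{cor_main} from Theorem~\ref{thm_main} by specializing the auxiliary parameter to $x = k$, the point being that this single choice already covers every $n \geq 2k+1$; the remaining boundary value $n = 2k$ is then disposed of by the classical spread argument.

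First I would set $q_0 := (k-1)!\cdot 2^{2k+1}$. Since $2k+1 \geq k+2$ for all $k \geq 1$, this $q_0$ is at least as large as either of the $q$-thresholds $(k-1)!\cdot 2^{k+2}$ and $(k-1)!\cdot 2^{2k+1}$ occurring in hypotheses (a) and (b) of Theorem~\ref{thm_main} at $x = k$, and certainly $q_0 \geq 2$. Next, for $n = 2k$ one has $k \mid n$, so here the bound of at least $\gauss{n-1}{k-1}$ nonnegative $k$-subspaces is already the theorem of Manickam and Singhi recalled in the introduction: a $k$-spread of $V$ consists of $q^k+1$ subspaces whose weights sum to $\sum_{P\in\scrP} f(P) = 0$, so each spread contains a nonnegative member, and averaging over all spreads gives $\gauss{n}{k}/(q^k+1) = \gauss{n-1}{k-1}$. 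As no equality characterization is claimed when $n = 2k$, this case is complete, and I may assume $n \geq 2k+1$ henceforth. (Note that none of the alternatives (a),(b),(c) of Theorem~\ref{thm_main} can be satisfied at $n=2k$ for $x \leq k$, so this boundary case genuinely needs a separate argument.)

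For $n \geq 2k+1$ and $q \geq q_0$ I would simply check that one of the three alternatives of Theorem~\ref{thm_main} holds with $x = k$. If $n \geq 3k$, alternative (c) applies. If $n = 2k+1$, then $(k-1)(2k+1) = 2k^2-k-1 \geq 2k^2-2k+1 = (2k-1)k-k+1$ because $k \geq 2$, so alternative (b) applies (and $q \geq q_0$ meets its $q$-bound). If $2k+2 \leq n \leq 3k-1$ — a range non-empty only for $k \geq 3$ — then already at $n = 2k+2$ we have $(k-1)(2k+2) = 2k^2-2 \geq 2k^2-2k+2 = (2k-1)k-k+2$ because $k \geq 2$, and the left side is increasing in $n$, so alternative (a) applies; the case $k = 2$ is covered entirely by (b) at $n = 5$ and by (c) for $n \geq 6$, so it should be recorded separately. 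In each case Theorem~\ref{thm_main} yields at least $\gauss{n-1}{k-1}$ nonnegative $k$-subspaces, and its equality clause — valid precisely for $n \geq 2k+1$ — identifies the family of nonnegative $k$-subspaces with the set of all $k$-subspaces through a fixed $1$-dimensional subspace whenever equality occurs, which is exactly the assertion of the corollary.

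Since everything reduces to Theorem~\ref{thm_main}, there is no substantial obstacle; the one thing to watch is that the promised $q_0$ must depend on $k$ alone and not on $n$, which is why it matters that the fixed choice $x = k$ works uniformly across all $n \geq 2k+1$, as the inequalities above confirm, and that the degenerate small cases (notably $k = 2$, where the interval forcing alternative (a) is empty) are verified by hand.
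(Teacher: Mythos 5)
Your proposal is correct and follows exactly the route the paper intends: the corollary is stated as the specialization $x=k$ of Theorem~\ref{thm_main}, and your verification that one of the alternatives (a), (b), (c) holds for every $n\geq 2k+1$ once $q\geq (k-1)!\cdot 2^{2k+1}$ is exactly the intended (and correct) computation. Your separate treatment of $n=2k$ via the spread/averaging argument of Manickam and Singhi is a genuine and necessary addition, since none of the theorem's alternatives literally covers $n=2k$; the paper relies on this cited result implicitly, so you have filled a real gap in the exposition rather than deviated from it.
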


\begin{table}
\begin{tabular}{llllll}
$x$ & $2$ & $3$ & $4$ & $5$ & $6$\\ \hline
$n \geq$ & $3k$ & $5k/2$ & $7k/3$ & $9k/4$ & $11k/5$\\ \hline
$q \geq$ & $16$ & $64$ & $384$ & $3072$ & $30720$
\end{tabular}
\caption{\footnotesize The lower bounds for $q$ in Theorem \ref{thm_main} for small $x$ if $n \geq 2k+2$.
  Notice that the case $x=2$ is basically \cite{Chowdhury2013} where a more careful counting achieved $q \geq 2$.}
\end{table}

\begin{table}
\begin{tabular}{llllll}
$x$ & $2$ & $3$ & $4$ & $5$ & $6$\\ \hline
$k \leq$ & $2$ & $4$ & $6$ & $8$ & $10$\\ \hline
$q \geq$ & $32$ & $256$ & $3072$ & $49152$ & $983040$
\end{tabular}
\caption{\footnotesize The lower bounds for $q$ in Theorem \ref{thm_main} for small $k$ if $n = 2k+1$. 
  Note that there is an unpublished result by Chowdhury, Sarkis, and Shahriari that shows 
  the result for $(n, k, q) = (5, 2, q)$, $q > 2$.
  Based on their unpublished notes the author proved the conjecture for $(n, k, q) = (5, 2, q)$.}
\end{table}

Conjecture \ref{conj_mms_vs} has the same connection to Erd\H{o}s-Ko-Rado Theorems
as Conjecture \ref{conj_mms_sets}, since for $n \geq 2k$ 
the largest set of $k$-dimensional subspaces
which pairwise intersect non-trivially is at most the number of $k$-dimensional
subspaces on a fixed $1$-dimensional subspace \cite{Hsieh1975,Frankl1986}.

We shall extend the technique used by Chowdhury et al. Some parts of the proof
are identical. We shall refer to their results to avoid unnecessary repetitions
whenever this is the case. The purpose of the main theorem is to show that Conjecture 
\ref{conj_mms_vs} holds for $n \geq 2k$ if $q$ is large. Often we will ignore 
minor improvements on the condition on $q$ if this would decrease the readability
of the formulas. With the used techniques the lower bound on $q$ 
can not be much better than $(x-1)!$, so the condition on $q$ in Theorem \ref{thm_main}
is reasonably good.

It might be advisable to read Chowdhury et al. \cite{Chowdhury2013} before reading this publication, 
since the structure of the proof in \cite{Chowdhury2013}, which shows the result for $n \geq 3k$,
is roughly the same, but less technical. If one reads this paper with fixed $x=2$, then it is basically
identical to \cite{Chowdhury2013}.

\section{Some Properties of the Gaussian Coefficient}

This section introduces the Gaussian coefficient and some of its properties which 
are needed throughout the following sections. For integers $n, k$ the Gaussian
coefficient is defined as
\begin{align*}
  \gauss{n}{k}_q := \begin{cases}
		   \prod_{i=1}^{k} \frac{q^{n-i+1}-1}{q^i-1} & \text{ if } 0 \leq k \leq n\\
                    0 & \text{ otherwise.}
                  \end{cases}
\end{align*}
Throughout this paper $q$ is fixed, so we will write $\gauss{n}{k}$ instead of $\gauss{n}{k}_q$.
An easy calculation shows for $n \geq k \geq 0$, respectively, $n > k > 0$
\begin{align}
  &\gauss{n}{k} = \gauss{n}{n-k},\label{eq_gauss_compl}\\
  &\gauss{n}{k} = \gauss{n-1}{k} q^k + \gauss{n-1}{k-1} = \gauss{n-1}{k} + \gauss{n-1}{k-1} q^{n-k}. \label{eq_gauss_rec}
\end{align}
A standard double counting argument shows that the number $k$-dimensional subspaces of an
$n$-dimensional vector space over a finite field $\bbF_q$ is $\gauss{n}{k}$.
We will write $\gaussm{n}$ for $\gauss{n}{1}$, the number of $1$-dimensional
subspaces of $V$. If $P$ is a $1$-dimensional subspaces, then the factor space
$V/P$ contains $\gauss{n-1}{k-1}$ $(k-1)$-dimensional subspaces. Hence, exactly
$\gauss{n-1}{k-1}$ $k$-dimensional subspaces of $V$ contain a fixed $1$-dimensional
subspace.

\begin{lemma}\label{lem_stupid_upper_bnd_gauss}
  Let $q \geq 3$, and $n \geq k \geq 0$. Then
   \begin{align*}
      \gauss{n}{k} \leq 2 q^{k(n-k)}.
   \end{align*}
\end{lemma}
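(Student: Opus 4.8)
The plan is to bound the Gaussian coefficient term by term using the product formula
$\gauss{n}{k} = \prod_{i=1}^{k} \frac{q^{n-i+1}-1}{q^i-1}$. First I would observe that each factor satisfies
$\frac{q^{n-i+1}-1}{q^i-1} < \frac{q^{n-i+1}}{q^i-1} \le q^{n-i+1} \cdot \frac{1}{q^i-1}$, and then compare $q^i - 1$ with $q^{i-1}$: since $q \ge 3$, we have $q^i - 1 \ge q^i - q^{i-1} = q^{i-1}(q-1) \ge q^{i-1}$, so $\frac{1}{q^i-1} \le q^{-(i-1)} = q^{1-i}$. Hence each factor is at most $q^{n-i+1} \cdot q^{1-i} = q^{n-2i+2}$, and multiplying over $i = 1, \dots, k$ gives $\gauss{n}{k} \le q^{\sum_{i=1}^k (n-2i+2)} = q^{kn - k(k+1) + 2k} = q^{kn - k^2 + k} = q^{k(n-k)+k} = q^{k(n-k)} \cdot q^k$. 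This is off by a factor $q^k$ from what we want, so a cruder bound on each factor individually is not quite enough and some care is needed.

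To fix this I would sharpen the estimate for the last factor (or, symmetrically, exploit a telescoping cancellation). A cleaner route: write $\gauss{n}{k} = q^{k(n-k)} \prod_{i=1}^{k} \frac{q^{n-i+1}-1}{q^{n-k}(q^i-1)} \cdot q^{k(n-k) \text{ correction}}$ — more concretely, use the identity that each factor equals $q^{n-k} \cdot \frac{1 - q^{-(n-i+1)}}{1 - q^{-i}} \cdot q^{\,i-1-(n-k)}$... rather than chase this, I would instead directly estimate the ratio $\gauss{n}{k} / q^{k(n-k)} = \prod_{i=1}^k \frac{q^{n-i+1}-1}{q^i-1} \cdot q^{-(n-k)}$ for each $i$, which equals $\prod_{i=1}^{k} \frac{q^{n-i+1}-1}{q^{n-k+i}-q^{n-k}} = \prod_{i=1}^k \frac{1 - q^{i-1-n}}{1 - q^{-(n-k+i) }\cdot \text{(unit)}}$. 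The clean way is: $\frac{q^{n-i+1}-1}{q^i-1} \le q^{n-k} \cdot \frac{1}{1-q^{-i}}$ when combined correctly — in fact $\frac{q^{n-i+1}-1}{q^i-1} = q^{n-i+1-i} \cdot \frac{1-q^{-(n-i+1)}}{1-q^{-i}} \le q^{n-2i+1}\cdot\frac{1}{1-q^{-i}}$. Then $\gauss{n}{k} \le q^{\sum (n-2i+1)} \prod_{i=1}^\infty \frac{1}{1-q^{-i}} = q^{k(n-k)} \cdot \prod_{i \ge 1}(1-q^{-i})^{-1}$, and it remains to check $\prod_{i\ge1}(1-q^{-i})^{-1} \le 2$ for $q \ge 3$.

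The main obstacle is therefore this last infinite-product estimate. I would bound $\prod_{i=1}^{\infty} (1 - q^{-i})^{-1}$ by taking logarithms: $-\sum_{i \ge 1} \log(1 - q^{-i}) \le \sum_{i \ge 1} \frac{q^{-i}}{1 - q^{-i}} \le \frac{1}{1-q^{-1}}\sum_{i\ge1} q^{-i} = \frac{1}{1-q^{-1}} \cdot \frac{q^{-1}}{1-q^{-1}} = \frac{q}{(q-1)^2}$, using $-\log(1-t) \le \frac{t}{1-t}$. For $q \ge 3$ this is at most $\frac{3}{4} < \log 2 \approx 0.693$? — it is $0.75 > 0.693$, so I would instead separate the first factor: $\prod_{i\ge1}(1-q^{-i})^{-1} = (1-q^{-1})^{-1}\prod_{i\ge2}(1-q^{-i})^{-1}$, bound $\prod_{i\ge2}(1-q^{-i})^{-1}$ crudely by $\exp\big(\sum_{i\ge2}\frac{q^{-i}}{1-q^{-1}}\big) = \exp\big(\frac{q^{-1}}{(q-1)(1-q^{-1})}\big)$, and verify the resulting explicit bound is $\le 2$ for $q = 3$ (the worst case, since the product is decreasing in $q$) and hence for all $q \ge 3$. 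Checking the single numerical inequality at $q=3$ finishes the proof; everything else is routine.
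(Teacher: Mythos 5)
Your final argument is correct and is essentially the paper's proof: you factor out $q^{k(n-k)}$, reduce to bounding $\prod_{i\ge 1}(1-q^{-i})^{-1}\le 2$ for $q\ge 3$, and verify this by taking logarithms, treating the $i=1$ term separately, and summing a geometric series over the tail. The only differences are cosmetic (which linear bound on the logarithm you use and the exact constants), so nothing further is needed beyond writing out the numerical check at $q=3$ cleanly.
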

\begin{proof}
  By definition,
  \begin{align*}
    \gauss{n}{k} &= \prod_{i=1}^k \frac{q^{n-k+i}-1}{q^i-1} \leq \prod_{i=1}^k \frac{q^{n-k+i}}{q^i-1} = q^{k(n-k)} \prod_{i=1}^k \frac{q^i}{q^i-1}.
  \end{align*}
  Hence,
  \begin{align*}
    \log\left( \prod_{i=1}^\infty \frac{q^i}{q^i-1} \right) &= \sum_{i=1}^\infty \log\left( 1 + \frac{1}{q^i-1} \right)\\
    &\leq \sum_{i=1}^\infty \frac{1}{q^i-1} \leq \frac{1}{2} + \frac{9}{8} \sum_{i=2}^\infty \frac{1}{q^i} = \frac{11}{16}.
  \end{align*}
  This shows the assertion, since $\exp(\frac{11}{16}) < 2$.
\end{proof}

\begin{lemma}\label{lem_bnd_nk}
  For $0 \leq k \leq a \leq n-k$ we find
  \begin{align*}
    &q^{a(k-1)} \gauss{n-a-1}{k-1} \geq \left( 1 - \frac{2}{q^{n-k-a+1}} \right) \gauss{n-1}{k-1}.
  \end{align*}
\end{lemma}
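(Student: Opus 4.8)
The plan is to expand both Gaussian coefficients as explicit products and compare them factor by factor. Writing out the definition,
\[
  \gauss{n-a-1}{k-1} = \prod_{i=1}^{k-1} \frac{q^{(n-a-1)-(k-1)+i}-1}{q^i-1}
    = \prod_{i=1}^{k-1} \frac{q^{n-a-k+i}-1}{q^i-1},
\]
while $\gauss{n-1}{k-1} = \prod_{i=1}^{k-1} \frac{q^{n-k+i}-1}{q^i-1}$. The denominators $q^i-1$ are identical in the two products, so after multiplying the left-hand side by $q^{a(k-1)} = \prod_{i=1}^{k-1} q^a$ the claimed inequality is equivalent to
\[
  \prod_{i=1}^{k-1} \bigl(q^{n-k+i} - q^a\bigr) \;\geq\; \Bigl(1 - \tfrac{2}{q^{n-k-a+1}}\Bigr)\prod_{i=1}^{k-1}\bigl(q^{n-k+i}-1\bigr).
\]
Since $a \le n-k$, we have $a \le n-k < n-k+i$ for each $i \ge 1$, so every factor $q^{n-k+i}-q^a$ on the left is nonnegative (indeed positive); thus dividing through is legitimate and the statement reduces to showing
\[
  \prod_{i=1}^{k-1} \frac{q^{n-k+i}-q^a}{q^{n-k+i}-1} \;\geq\; 1 - \frac{2}{q^{n-k-a+1}}.
\]

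The next step is to bound the product from below. Each factor is $1 - \dfrac{q^a-1}{q^{n-k+i}-1}$, and since $q^a - 1 \le q^a$ and $q^{n-k+i}-1 \ge q^{n-k+i}/2$ for $q \ge 2$ (hence certainly for $q\ge 3$), each factor is at least $1 - \dfrac{2q^a}{q^{n-k+i}} = 1 - \dfrac{2}{q^{n-k-a+i}}$. Using the elementary inequality $\prod_j (1-t_j) \ge 1 - \sum_j t_j$ for $t_j \in [0,1]$, the product is at least
\[
  1 - \sum_{i=1}^{k-1} \frac{2}{q^{n-k-a+i}} \;\geq\; 1 - \frac{2}{q^{n-k-a+1}}\sum_{j=0}^{\infty} \frac{1}{q^{j}} \;=\; 1 - \frac{2}{q^{n-k-a+1}}\cdot\frac{q}{q-1}.
\]
This is \emph{almost} what is wanted but loses a factor $q/(q-1)$, so the geometric-series slack must be spent more carefully; this is the one spot requiring attention. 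The fix is to not bound every factor by $1/2$ so crudely: for the dominant term $i=1$ keep $q^{n-k+1}-1$ in the denominator exactly, and only for $i\ge 2$ use the $q^{n-k+i}-1 \ge \tfrac12 q^{n-k+i}$ estimate, whose contributions form a geometric series starting at $q^{-(n-k-a+2)}$ with ratio $1/q$, summing to at most $\tfrac{1}{q^{n-k-a+2}}\cdot\tfrac{q}{q-1} \le \tfrac{2}{q^{n-k-a+2}} \le \tfrac{1}{q^{n-k-a+1}}$ for $q\ge 2$; combined with the $i=1$ term $\tfrac{q^a-1}{q^{n-k+1}-1} \le \tfrac{1}{q^{n-k-a+1}}$ this gives the bound $1 - \tfrac{2}{q^{n-k-a+1}}$.

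The main obstacle is purely this bookkeeping of the geometric tail: one must squeeze the sum of all the error terms below $2/q^{n-k-a+1}$ rather than the easy-but-too-weak $2q/((q-1)q^{n-k-a+1})$. Splitting off the $i=1$ term (or, alternatively, noting $n-k-a+1 \ge 1$ so that one can afford to be slightly wasteful but must check the $q=3$ boundary case by hand) resolves it. Once the tail is controlled, the chain of inequalities assembles directly into the statement, with the edge cases $k \in \{0,1\}$ (empty product, LHS $=\gauss{n-1}{k-1}$) and $a=n-k$ checked trivially.
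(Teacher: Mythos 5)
Your approach is correct in substance but genuinely different from the paper's. The paper imports the inequality $q^{a(k-1)} \gauss{n-a-1}{k-1} \geq \gauss{n-1}{k-1} - \gaussm{a} \gauss{n-2}{k-2}$ from Chowdhury et al.\ and then computes the ratio $\gaussm{a}\gauss{n-2}{k-2}/\gauss{n-1}{k-1} = \frac{(q^a-1)(q^{k-1}-1)}{(q^{n-1}-1)(q-1)}$ exactly to bound the single correction term by $2q^{-(n-a-k+1)}$; you instead expand both Gaussian coefficients as products, cancel the common denominators, and reduce to the termwise product $\prod_{i=1}^{k-1}\bigl(1-\frac{q^a-1}{q^{n-k+i}-1}\bigr)$, which you handle with the Weierstrass inequality $\prod(1-t_i)\ge 1-\sum t_i$. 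This is self-contained (no appeal to the earlier lemma) and arguably more transparent. One correction to your bookkeeping: with the crude estimate $q^{n-k+i}-1\ge \tfrac12 q^{n-k+i}$ the tail terms for $i\ge 2$ are $\frac{2}{q^{n-k-a+i}}$, so the geometric series starts at $\frac{2}{q^{n-k-a+2}}$ (you dropped the factor $2$) and sums to $\frac{2}{q^{n-k-a+2}}\cdot\frac{q}{q-1}$, which is only $\le \frac{1}{q^{n-k-a+1}}$ for $q\ge 3$. The cleaner fix is to note that the mediant-style bound you already use for $i=1$, namely $\frac{q^a-1}{q^{n-k+i}-1}\le \frac{q^a}{q^{n-k+i}}=q^{-(n-k-a+i)}$ (valid for every $i\ge 1$ since $a\le n-k<n-k+i$), applies uniformly; then
\begin{align*}
  \sum_{i=1}^{k-1}\frac{q^a-1}{q^{n-k+i}-1}\le \frac{1}{q^{n-k-a}}\sum_{i\ge 1}q^{-i}=\frac{1}{q^{n-k-a}(q-1)}\le \frac{2}{q^{n-k-a+1}}
\end{align*}
for all $q\ge 2$, with no case split needed. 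With that substitution your argument is complete, including the trivial cases $k\in\{0,1\}$.
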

\begin{proof}
  We may assume $k \geq 1$, since in the case $k=0$ both sides of the equation are zero.
  As in \cite[Lemma 3.3]{Chowdhury2013} we have
  \begin{align}
     q^{a(k-1)} \gauss{n-a-1}{k-1} \geq \gauss{n-1}{k-1} - \gaussm{a} \gauss{n-2}{k-2}.\label{eq_bnd_nk_1}
  \end{align}
  Furthermore, the following inequality can be easily verified under the hypothesis and $k \geq 1$.
  \begin{align}
    &\frac{\gaussm{a} \gauss{n-2}{k-2}}{\gauss{n-1}{k-1}} 
    = \frac{(q^a-1)(q^{k-1}-1)}{(q^{n-1}-1)(q-1)} \notag\\
    &= \frac{2}{q^{n-a-k+1}} - \frac{q^{k+a+1} - 2q^{k+a} + q^{k+1} + q^{a+2} - q^2 - 2q^{-n+k+a+2} + 2q^{-n+k+a+1}}{q(q-1)(q^n-q)} \notag\\
    &\leq \frac{2}{q^{n-a-k+1}}.\label{eq_bnd_nk_2}
  \end{align}
  The equations \eqref{eq_bnd_nk_1} and \eqref{eq_bnd_nk_2} yield the assertion.
\end{proof}

\section{A Bound on Pairwise Intersecting Subspaces}

One crucial ingredient of the result by Chowdhury et al. is
\cite[Lemma 3.6]{Chowdhury2013} which roughly says the following.

\begin{lemma}[{\cite[Lemma 3.6]{Chowdhury2013}}]\label{lem_geom_chowdhury}
  Let $n \geq 2k$.
  Let $A$ and $C$ be two $k$-dimensional subspaces of $V$. If $A \cap C$ is a
  $1$-dimensional subspaces, then the number of $k$-dimensional subspaces of
  $V$ which non-trivially intersect both $A$ and $C$ but do not contain $A \cap C$
  is at most
  \begin{align*}
    \frac{1}{q^{n-3k}} \gauss{n-1}{k-1}.
  \end{align*}
\end{lemma}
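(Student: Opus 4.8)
The plan is to bound the "bad" $k$-dimensional subspaces $B$ --- those with $B\cap A\ne 0$, $B\cap C\ne 0$, and $P\not\subseteq B$, where I write $P:=A\cap C$ --- by showing that each such $B$ contains one member of a small, explicitly described family of planes, and then multiplying by $\gauss{n-2}{k-2}$, the number of $k$-dimensional subspaces through a fixed $2$-dimensional subspace. First I would observe that a bad $B$ must contain a point $Q_A\le B\cap A$ with $Q_A\ne P$: otherwise the only point of the nonzero space $B\cap A$ would be $P$, forcing $P\subseteq B$. Likewise $B$ contains a point $Q_C\le B\cap C$ with $Q_C\ne P$. These two points are distinct, since a common point would lie in $A\cap C=P$; hence $D:=Q_A+Q_C$ is a plane contained in $B$.

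The key geometric point is that for \emph{any} point $Q_A$ of $A$ with $Q_A\ne P$ and any point $Q_C$ of $C$ with $Q_C\ne P$, the plane $Q_A+Q_C$ does not contain $P$. Indeed, writing a generator of $P$ as $\alpha a+\beta c$ with $\langle a\rangle=Q_A$ and $\langle c\rangle=Q_C$, membership in $A$ forces $\beta c\in A$; but $c\notin A$, because $P$ is the only point of $C$ lying in $A$, so $\beta=0$ and $P=Q_A$, a contradiction. Since such a plane is contained in neither $A$ nor $C$, it meets $A$ in exactly the point $Q_A$ and $C$ in exactly $Q_C$, so the assignment $(Q_A,Q_C)\mapsto Q_A+Q_C$ is injective. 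Consequently every bad $B$ contains at least one plane from the family $\mathcal D:=\{\,Q_A+Q_C: Q_A\le A,\ Q_C\le C,\ Q_A\ne P\ne Q_C\,\}$, and $|\mathcal D|=(\gaussm{k}-1)^2=q^2\gaussm{k-1}^2$, using $\gaussm{k}-1=q\gaussm{k-1}$.

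Counting the pairs $(D,B)$ with $D\in\mathcal D$ and $D\le B$ therefore bounds the number of bad spaces by $q^2\gaussm{k-1}^2\gauss{n-2}{k-2}$, and it remains to verify
\begin{align*}
  q^2\gaussm{k-1}^2\gauss{n-2}{k-2}\ \le\ \frac{1}{q^{n-3k}}\gauss{n-1}{k-1}.
\end{align*}
Using $\gauss{n-1}{k-1}=\frac{q^{n-1}-1}{q^{k-1}-1}\gauss{n-2}{k-2}$ (immediate from the definition) and $\gaussm{k-1}=\frac{q^{k-1}-1}{q-1}$, this reduces to an elementary inequality among powers of $q$ that is valid for all $n\ge 2k$: for $q\ge 3$ it drops out of Lemma \ref{lem_stupid_upper_bnd_gauss}, and $q=2$ is a direct check. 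I expect the geometric part to be entirely routine; the only mildly delicate step is arranging this last estimate so that the constant comes out exactly as $q^{-(n-3k)}$, and the reason $n\ge 2k$ already suffices is precisely that the over-counting of each $B$ --- by how many planes of $\mathcal D$ it contains --- is negligible once $n$ is that large.
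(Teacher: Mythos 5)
Your argument is correct and is essentially the approach of the cited source and of this paper's own generalization (Lemma \ref{lem_bad_config_intersection_bound} with $x=2$): every such subspace contains a plane spanned by a point of $A$ other than $A\cap C$ and a point of $C$ other than $A\cap C$, and one multiplies the number of such planes by $\gauss{n-2}{k-2}$. The closing inequality you defer does hold for all $q\ge 2$ and $n\ge 2k$ (for $q\ge 3$ via Lemma \ref{lem_stupid_upper_bnd_gauss} and $\gauss{n-1}{k-1}>q^{(k-1)(n-k)}$, and by direct computation for $q=2$), so there is no gap.
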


It is possible to generalize this statement and we shall do so in this section 
with Lemma \ref{lem_bad_config_intersection_bound}. 
The following lemma is the main improvement of this work over \cite{Chowdhury2013}
while all the other results are merely technically necessary reformulations
of the methods given by Chowdhury et al.

\begin{definition}
  Let $Y$ be a set of subspaces of an $n$-dimensional vector space $V$.
  We say that a subspace $M$ intersects $Y$ badly 
  if all $A \in Y$ satisfy $\dim(A \cap M) = 1$ and
  all $A, B \in Y$ with $A \neq B$ satisfy $\dim(A \cap B \cap M) = 0$.
\end{definition}

\begin{definition}
 We shall call $Y$ a bad configuration if it is a set of $k$-dimensional subspaces 
  of an $n$-dimensional vector space $V$ such that all $C \in Y$ intersect $Y \setminus \{ C \}$ badly.
\end{definition}

\begin{lemma}\label{lem_bad_config_intersection_bound}
  Let $1 < x \leq k$. Let $q \geq 3$.
  Let $n \geq 2k + \delta \geq 2k+1$.
  Suppose $(x-1)n \geq (2x-1)k-x+\delta$.
  Let $q \geq \lowerbndqinlem$.
  Let $Y$ be a bad configuration of $k$-dimensional subspaces of a vector space $V$ with $x = |Y|$.
  Then the number of $k$-dimensional subspaces of $V$ which meet $Y$ badly is at most 
  \begin{align*}
   x^2 \cdot 2^{x} \cdot q^{-\delta} \gauss{n-1}{k-1}.
  \end{align*}
\end{lemma}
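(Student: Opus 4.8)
The plan is to bound the number of $k$-dimensional subspaces $M$ meeting $Y = \{A_1, \dots, A_x\}$ badly by first fixing, for each $i$, the $1$-dimensional subspace $P_i := A_i \cap M$, and then counting the $k$-dimensional subspaces $M$ through the span $\langle P_1, \dots, P_x\rangle$. Because $Y$ is a bad configuration, a badly-meeting $M$ intersects each $A_i$ in a line $P_i \subseteq A_i$, and these lines are "independent enough" that $\dim\langle P_1,\dots,P_x\rangle$ can be controlled; the key point is to understand how large this span is and how many choices there are for the tuple $(P_1,\dots,P_x)$. First I would set up the counting: the number of choices for each $P_i$ inside $A_i$ is at most $\gaussm{k} \le 2q^{k-1}$ by Lemma \ref{lem_stupid_upper_bnd_gauss}, so at most $2^x q^{x(k-1)}$ tuples, and for each fixed tuple spanning a $j$-dimensional subspace $W$, the number of $k$-dimensional $M \supseteq W$ is $\gauss{n-j}{k-j} \le 2 q^{(k-j)(n-k)}$, again by Lemma \ref{lem_stupid_upper_bnd_gauss}.

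Next I would analyze the possible values of $j = \dim\langle P_1,\dots,P_x\rangle$. Since each $A_i$ is a $k$-space and $M$ meets each in a line but the pairwise triple intersections $A_i \cap A_\ell \cap M$ are trivial, one gets $j \ge $ something like $x$ in the "generic" case, but the worst case for the bound is when $j$ is as small as possible, i.e. $j$ near $2$ or so — here I would need the badness hypothesis (the $P_i$ are pairwise distinct because $\dim(A_i \cap A_\ell \cap M) = 0$ forces $P_i \ne P_\ell$, so $j \ge 2$). Then the estimate becomes roughly $2^x q^{x(k-1)} \cdot 2 q^{(k-j)(n-k)}$, and I would need to check that for every admissible $j$ this is at most $x^2 \cdot 2^x \cdot q^{-\delta} \gauss{n-1}{k-1}$. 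Using $\gauss{n-1}{k-1} \ge q^{(k-1)(n-k)}$ (the crude lower bound from dropping the $-1$'s, valid since each factor $\frac{q^{n-k+i}-1}{q^i-1} \ge q^{n-k}$), it suffices that the exponent $x(k-1) + (k-j)(n-k)$ does not exceed $(k-1)(n-k) - \delta$, i.e. that $(k-1-j+1)(n-k) = (k-j)(n-k)$ is beaten down — this is where the hypothesis $(x-1)n \ge (2x-1)k - x + \delta$ must be brought in, presumably after summing a geometric-type series over the allowed $j$ and absorbing a factor like $x$ or $x^2$ for the number of choices of which lines coincide / for the range of $j$.

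The main obstacle I expect is the bookkeeping in the range $2 \le j \le x$ (or however far it can go): one cannot just take the single worst $j$, because for each value of $j$ the number of tuples $(P_1, \dots, P_x)$ spanning a $j$-space is smaller when $j$ is smaller (the lines are forced to lie in a small common subspace, which must itself meet every $A_i$), and it is the product of "few tuples, many extensions" against "many tuples, few extensions" that has to be balanced. Concretely, if $\dim\langle P_1,\dots,P_x\rangle = j$ then after choosing $j$ of the lines freely (cost $\le (2q^{k-1})^j$) the remaining $x - j$ lines lie in the span of the first $j$ together with their own $A_i$, which typically pins them down to $O(1)$ or $O(q^{\text{small}})$ choices; making this precise and uniform in $j$, and then verifying the resulting inequality reduces exactly to $(x-1)n \ge (2x-1)k - x + \delta$ together with $n \ge 2k + \delta$, is the crux. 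I would organize this as: (i) reduce to counting pairs (tuple of lines, extension to $M$); (ii) stratify by $j$; (iii) for each $j$ give the two factor bounds via Lemma \ref{lem_stupid_upper_bnd_gauss}; (iv) compare exponents against $\gauss{n-1}{k-1} \ge q^{(k-1)(n-k)}$, using the dimension hypothesis to kill the $(n-k)$-linear terms and using $q \ge 3$ to absorb the constant $2$-powers, finally summing over $j$ to collect the $x^2$; and I would expect the constant $2^x$ in the statement to come precisely from the $x$-fold application of $\gaussm{k} \le 2q^{k-1}$ and the $x^2$ from the double sum over which lines are "new" at each dimension level.
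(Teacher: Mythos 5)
Your overall architecture matches the paper's: stratify the badly-meeting subspaces $M$ by the dimension $j$ of $S:=\langle A_1\cap M,\dots,A_x\cap M\rangle$ (which indeed satisfies $2\le j\le x$), bound (number of possible $S$) times (number of $k$-spaces through $S$) via Lemma \ref{lem_stupid_upper_bnd_gauss}, and compare against $\gauss{n-1}{k-1}>q^{(k-1)(n-k)}$. Your treatment of the top stratum $j=x$ is correct and does reduce exactly to $(x-1)n\ge(2x-1)k-x+\delta$. But there is a genuine gap in the strata $2\le j<x$, and it is precisely at the point you flag as "the crux." Your proposed mechanism --- choose $j$ of the lines freely at cost $(2q^{k-1})^{j}$ and argue the remaining $x-j$ lines are pinned down --- yields the exponent $j(k-1)+(k-j)(n-k)$, and the inequality $j(k-1)+(k-j)(n-k)\le(k-1)(n-k)-\delta$ is equivalent to $(j-1)n\ge(2j-1)k-j+\delta$. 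This condition is \emph{hardest} for the smallest $j$: at $j=2$ it reads $n\ge 3k-2+\delta$, which is not implied by the hypotheses in the regime $2k<n<3k$ that the lemma exists to handle. So the claim that the verification "reduces exactly to $(x-1)n\ge(2x-1)k-x+\delta$" after summing over $j$ is false; the hypothesis only covers $j=x$. (Pinning down the remaining $x-j$ lines cannot rescue this: the cost of the $j$ free lines is already too large by a factor of about $q^{k-j}$, and in any case once $S$ is fixed those lines are determined, so they were never the problem.)

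The missing idea is a genuinely geometric saving inside the spanning set itself. When $\dim S=m<x$, build $S$ greedily, adding one line $\tilde B\cap A_j$ at a time in a canonical order; since $m<x$ there is a first step $i_0$ at which the growing span acquires nontrivial intersection with at least \emph{two} new members $A_{i_0},A_j$ of $Y$ simultaneously. Badness of $Y$ gives $\dim\langle A_j,A_{i_0}\rangle=2k-1$, and the dimension count
\begin{align*}
\dim\bigl(\langle B_{i_0},A_j\rangle\cap A_{i_0}\bigr)\le\bigl(\dim\langle B_{i_0}\rangle+k-1\bigr)+k-(2k-1)\le m
\end{align*}
shows that the line chosen at step $i_0$ is confined to an at most $m$-dimensional subspace of $A_{i_0}$, so it costs only $\gaussm{m}\le 2q^{m-1}$ rather than $\gaussm{k}$. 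This replaces the exponent $m(k-1)$ by $(m-1)(k-1)+(m-1)=(m-1)k$, and the required inequality $(m-1)k+(k-m)(n-k)\le(k-1)(n-k)-\delta$ then reduces to $n\ge 2k+\delta/(m-1)$, which follows from $n\ge 2k+\delta$ alone --- no extra hypothesis needed for $m<x$. The canonical ordering also makes the spanning set and the special index pair determined by $S$, so the over-count is only $\binom{x-1}{2}$ rather than the $\binom{x}{j}$ your scheme would incur. Without this step your exponent bookkeeping does not close.
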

\begin{proof}
  Let $Y = \{ A_1, \ldots, A_x\}$ be a bad configuration.
  Let $\tilde{B}$ be a $k$-dimen\-sional subspace of $V$ which meets $Y$ badly.
  Define $S$ as $\langle \tilde{B} \cap A_i: 1 \leq i \leq x \rangle$.
  By the definition of $\tilde{B}$, the subspace $S$ meets $Y$ badly. 
  The subspace $S$ has at most dimension $x$, since it is spanned by $x$ vectors,
  and $S$ has at least dimension $2$, since it intersects $Y$ badly (i.e. the subspaces 
  $S \cap A_i$ are pairwise disjoint).
  Let $m$, $2 \leq m \leq x$, be the dimension of $S$.
  We shall provide upper bounds for the number of choices for $S$ for given $m$ in Part 1. 
  Then, in Part 2, we will bound the number of choices to extend a given $S$
  to a $k$-dimensional subspace of $V$.
  
  \paragraph*{Part 1: The number of choices for a badly intersecting $m$-dimensional subspace.} 
  \textbf{Case $x = m \geq 2$.} We have at most $\gaussm{k}^x$ choices
  for the $1$-dimensional intersections of $S$ with $A_1, \ldots, A_x$, since any $A_i$ contains
  $\gaussm{k}$ $1$-dimensional subspaces.
  
  \textbf{Case $x > m \geq 2$.} 
  Let $M := \{ S \cap A_i \}$.
  By assumption, $S$ has dimension $m$, so there exists a set $B \subseteq M$ with $m$
  elements such that $\langle B \rangle = S$. 
  Hence, we can choose $S$ in $m$ steps by choosing $B$.
  Let $B_i$ be the subset of $B$ after $i$ steps.
  Define $M_i$ as $\{ \langle B_i \rangle \cap A_j: \langle B_i \rangle \cap A_j \text{ non-trivial} \}$.
  We fix the ordering in which we add elements to $B$, i.e. we start by choosing 
  a $1$-dimensional subspace of $A_1$ for $B_1$ and we expand $B_i$ to $B_{i+1}$
  by choosing a $1$-dimensional subspace in $A_j$ where $j$ is the smallest
  possible index such that $\langle B_i \rangle$ and $A_j$ meet non-trivially.
  
  Since $m < x$, we have $|M_{i}| \geq |M_{i-1}| + 2$ for one $i$.
  Let $i_0$ be the first $i$ where this occurs.
  Let $A_j, A_{i_0}$ be two of the elements of $Y$ which meet $\langle B_{i_0-1} \rangle$
  trivially, but $\langle B_{i_0} \rangle$ non-trivially. Notice that $i_0 \in \{ 2, \ldots, m\}$
  and $j \in \{ i_0 + 1, \ldots, m \}$.
  In the following we are going to double count the tuples $(i_0, j, B_1, \ldots, B_{m} = B, S)$
  in accordance with the given defintions.
  
  For given $\{ B_1, \ldots, B_{i-1}\}$, $i \neq i_0$, we have at most $\gaussm{k}$ choices for
  $B_i$, since any $A_i$ contains $\gaussm{k}$ $1$-dimensional subspaces. 
  Hence, we have at most $\gaussm{k}^{m-1}$ choices for all $\{ B_i: 1 \leq i \leq m, i \neq i_0\}$.
  
  We have at most $m - 1 \leq x-2$ choices for $i_0$. We have at most $x-i_0-1$ choices for $j$
  for given $i_0$ as by construction all elements of $A_1, \ldots, A_{i_0-1}$ meet $\langle B_{i_0} \rangle$ non-trivially. 
  Therefore, we have at most $\binom{x-1}{2}$ choices for the pair $(i_0, j)$.
  By our choice of $i_0$, $A_{i_0} \cap \langle B_{i_0} \rangle$ is a subspace of $\langle B_{i_0}, A_j \rangle$.
  By $k = \dim(A_j) = \dim(A_{i_0})$, we have
  \begin{align*}
    &\dim(\langle B_{i_0}, A_j \rangle \cap A_{i_0}) \\
    &\leq \dim(\langle B_{i_0}, A_j \rangle) + \dim(A_{i_0}) - \dim(\langle A_j,  A_{i_0} \rangle)\\
    &\leq (\dim(\langle B_{i_0} \rangle) + \dim(A_j) - 1) + \dim(A_j) - (2\dim(A_j) - 1)\\
    &= \dim(\langle B_{i_0} \rangle) \leq m.
  \end{align*}
  Therefore, we have at most $\gaussm{m}$ choices to extend $B_{i_0-1}$ to $B_{i_0}$
  by choosing a $1$-dimensional subspace in $A_{i_0}$.
  For given $B$, $S$ is uniquely determined by $S = \langle B \rangle$.
  Hence, we have $\binom{x-1}{2} \gaussm{k}^{m-1} \gaussm{m}$ choices for $S$
  for given $(i, j_0, B_1, \ldots, B_m)$.
  
  On the other hand, for $S$ given,
  $B$, $B_i$ and therefore $i_0$ and $j$ are uniquely determined by their
  definitions.
  
  \paragraph*{Part 2: The number of choices for a $k$-dimensional subspace on a given $m$-dimensional subspace.}
  For given $S$ have we have $\gauss{n-m}{k-m}$ choices for a $k$-dimensional subspace
  through $S$. So if $m = x$, then we have at most
  \begin{align}
    \gaussm{k}^x \gauss{n-x}{k-x} \leq 2^{x+1} q^{x(k-1) + (n-k)(k-x)} \label{eq_bad_number1}
  \end{align}
  choices for $B$ by Corollary \ref{lem_stupid_upper_bnd_gauss}.
  If $m < x$, then we have at most
  \begin{align}
    &\gaussm{k}^{m-1} \binom{x-1}{2} \gaussm{m} \gauss{n-m}{k-m} \notag\\&\leq \binom{x-1}{2} \cdot 2^{m+1} q^{(m-1)k + (n-k)(k-m)} \label{eq_bad_number2}
  \end{align}
  choices for $B$ by Corollary \ref{lem_stupid_upper_bnd_gauss}.
  
  By \eqref{eq_bad_number2} and $n \geq 2k+1$ we find that the choices for $B$ for which the dimension
  of $S$ is less than $x$ is at most
  \begin{align*}
    &\sum_{m=2}^{x-1} \binom{x-1}{2} \cdot 2^{m+1} q^{(m-1)k + (n-k)(k-m)} \\
    &\leq \binom{x-1}{2} \left( \sum_{m=2}^{x-1} 2^{m+1} \right) \max_{\scriptscriptstyle m=2, \ldots, x-1} q^{(m-1)k + (n-k)(k-m)}\\
    &\leq \binom{x-1}{2} \cdot 2^{x+1} \max_{m=2, \ldots, x-1} q^{(m-1)k + (n-k)(k-m)}\\
    &= \binom{x-1}{2} \cdot 2^{x+1} q^{k + (n-k)(k-2)}\\
    &= (x-1)(x-2) \cdot 2^{x} q^{k + (n-k)(k-2)}
  \end{align*}
  Hence an upper bound for this number and \eqref{eq_bad_number1} is, by $(x-1)n \geq (2x-1)k-x+\delta$, 
  $n \geq 2k+\delta$, and $(x-1)(x-2) + x \leq x^2$ for $x \geq 2$,
  \begin{align}
    x^2 \cdot 2^{x} q^{\max(k + (n-k)(k-2), x(k-1) + (n-k)(k-x))} \notag\\ \leq x^2 \cdot 2^{x} q^{-\delta + (n-k)(k-1)}. \label{eq_bad_number3}
  \end{align}
  Applying the inequality $\gauss{n-1}{k-1} > q^{(k-1)(n-k)}$ to \eqref{eq_bad_number3} shows the assertion.
\end{proof}

\section{An Eigenvalue Technique}

In this section we shall restate the arguments used in Section 3 of \cite{Chowdhury2013}.
We shall include proofs for the results if these results extend results 
of \cite{Chowdhury2013} in some way.
Otherwise we will just refer to \cite{Chowdhury2013}.
Before we do this we want to give some context to the used eigenvalue technique.

Let $V$ be a vector space over $\bbF_q$.
Let $f: \scrP \rightarrow \bbR$ a weighting of $\scrP$ with 
$\sum_{P \in \scrP} f(P) = 0$. 
We suppose $f \not\equiv 0$ throughout this section, since the case $f \equiv 0$ is trivial.
We say that two subspaces $R$ and $S$ are \textit{incident} if $S \subseteq R$
or $R \subseteq S$.
Define the \textit{incidence matrix} $W_{ij}$
as the matrix whose rows are indexed by the $i$-dimensional subspaces of $V$,
whose columns are indexed by the $j$-dimensional subspaces of $V$, by
\begin{align*}
  (W_{ij})_{RS} = 
  \begin{cases}
    1 & \text{ if } S \text{ is incident with } R,\\
    0 & \text{ otherwise.}
  \end{cases}
\end{align*}
Define the \textit{adjacency matrix} $A_i$ as the matrix whose rows and columns are
indexed by the $k$-dimensional subspaces of $V$ by
\begin{align*}
  (A_i)_{RS} =
  \begin{cases}
    1 & \text{ if } \dim(S \cap R) = k - i,\\
    0 & \text{ otherwise.}
  \end{cases}
\end{align*}
We write $b_S$ for the weight of a $k$-dimensional subspace of $V$ 
(i.e. $b_S = \sum_{P \in S} f(P)$).
By the definition of the weight of $S$, clearly $b = W_{k1} f$ holds if
we consider $f = (f(P))$ as a vector indexed by the $1$-dimensional subspaces $P$ of $V$
and $b = (b_S)$ as a vector indexed by the $k$-dimensional subspaces $S$ of $V$.
It was shown by Frankl and Wilson \cite{Frankl1986} that the set of all sets of 
$k$-dimensional subspaces on a fixed $1$-dimensional subspace spans
the orthogonal sum of two eigenspaces of $A_i$ of the form $\langle j \rangle \perp \tilde{V}$, 
where $j$ is the all-one vector. This implies that $b = W_{k1} f$ is an 
eigenvector of $A_i$ and lies in the eigenspace $\tilde{V}$. The corresponding
eigenvalues can be found the literature \cite{Delsarte1976,Eisfeld1999}.
We shall use the PhD thesis
of Fr\'{e}d\'{e}ric Vanhove \cite{Vanhove2011} as reference.
In view of \cite[Theorem 3.2.4, Remark 3.2.5]{Vanhove2011}, $b$ is an 
eigenvector of the eigenspace $V_1^k$ (i.e. $\tilde{V} = V_1^k$ in the notation of \cite{Vanhove2011}) which leads to the following result.

\begin{lemma}
  Let $A_i$ be the distance-$i$ adjacency matrix of the $k$-dimensional subspaces
  of $V$. Let $b$ be the weight vector of the $k$-dimensional subspaces of $V$.
  Then $b$ is an eigenvector of $A_i$ with eigenvalue
  \begin{align*}
    \gauss{n-k-1}{i} \gauss{k-1}{i}  q^{(i+1)i}
      - \gauss{n-k-1}{i-1} \gauss{k-1}{i-1} q^{i(i-1)}.
  \end{align*}
\end{lemma}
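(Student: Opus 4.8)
The plan is to place $b$ inside a single eigenspace of the Grassmann scheme on the $k$-dimensional subspaces of $V$ and then to read off the eigenvalue of $A_i$ on that eigenspace from the literature. First I would recall that $b = W_{k1} f$, so $b$ is a linear combination of the columns of $W_{k1}$. Each such column is the characteristic vector of a \emph{star}: the set of all $k$-dimensional subspaces containing a fixed $1$-dimensional subspace $P$. By the theorem of Frankl and Wilson quoted above, the span of all star vectors is exactly the orthogonal sum $\langle j \rangle \perp V_1^k$, where $j$ is the all-one vector and $V_1^k$ is the second eigenspace in the notation of \cite{Vanhove2011}. Hence $b \in \langle j \rangle \oplus V_1^k$.

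Next I would kill the $\langle j \rangle$-component using the hypothesis $\sum_{P \in \scrP} f(P) = 0$. Double counting the incidences between $1$-dimensional and $k$-dimensional subspaces gives
\[
  j^\top b = \sum_{S} b_S = \sum_{S} \sum_{P \subseteq S} f(P) = \sum_{P \in \scrP} f(P) \cdot \#\{ S : \dim S = k,\ P \subseteq S \} = \gauss{n-1}{k-1} \sum_{P \in \scrP} f(P) = 0 .
\]
Thus $b \perp j$, and since $\langle j \rangle = V_0^k$ is the trivial eigenspace and the eigenspaces of the scheme are pairwise orthogonal, we conclude $b \in V_1^k$. (If $f \equiv 0$ then $b = 0$ and there is nothing to prove; otherwise $b$ is a genuine nonzero eigenvector of every $A_i$.)

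It remains to identify the eigenvalue of the distance-$i$ adjacency matrix $A_i$ on $V_1^k$, where the distance convention is $(A_i)_{RS} = 1$ iff $\dim(R \cap S) = k - i$. These eigenvalues are classical --- they are the $q$-analogue Eberlein numbers of the Grassmann scheme --- and are recorded e.g.\ in \cite[Theorem 3.2.4, Remark 3.2.5]{Vanhove2011} (compare also \cite{Delsarte1976,Eisfeld1999}). Specializing their formula for the eigenvalue on the $j$-th eigenspace to $j = 1$ gives exactly
\[
  \gauss{n-k-1}{i} \gauss{k-1}{i} q^{(i+1)i} - \gauss{n-k-1}{i-1} \gauss{k-1}{i-1} q^{i(i-1)},
\]
as asserted; one may cross-check this against the easy direct computation of the eigenvalue on $V_1^k$ in the case $i = 1$. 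I do not expect any substantive difficulty here: the only thing that needs genuine care is the bookkeeping of conventions --- confirming that the eigenspace indexing and the choice of distance relation in the cited source agree with the ones used in this paper, so that the cited eigenvalue is indeed the one acting on the eigenspace that contains $b$.
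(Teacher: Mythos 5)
Your proposal is correct and follows essentially the same route as the paper: eigenspace membership via Frankl--Wilson plus $b\perp j$ (which the paper handles in the discussion preceding the lemma), and the eigenvalue read off from Vanhove. Be aware that the entire content of the paper's actual proof is the step you wave at as ``specializing gives exactly'': Vanhove's formula yields $\gauss{n-k}{n-k-i}\gauss{k-1}{i}q^{i^2}-\gauss{n-k-1}{n-k-i}\gauss{k}{i}q^{i(i-1)}$, and converting this to the stated form takes a short but nontrivial computation with \eqref{eq_gauss_compl} and \eqref{eq_gauss_rec}.
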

\begin{proof}
  By \cite[Remark 3.2.5]{Vanhove2011}, \eqref{eq_gauss_compl}, and \eqref{eq_gauss_rec},
  \begin{align*}
    A_ib &= \gauss{n-k}{n-k-i} \gauss{k-1}{i} q^{i^2}
      - \gauss{n-k-1}{n-k-i} \gauss{k}{i} q^{i(i-1)}\\
      &= \gauss{n-k}{i} \gauss{k-1}{i} q^{i^2}
      - \gauss{n-k-1}{i-1} \gauss{k}{i} q^{i(i-1)}\\
      &= \gauss{n-k-1}{i} \gauss{k-1}{i} q^{(i+1)i}
      + \gauss{n-k-1}{i-1} \gauss{k-1}{i} q^{i^2}
      - \gauss{n-k-1}{i-1} \gauss{k}{i} q^{i(i-1)}\\
      &= \gauss{n-k-1}{i} \gauss{k-1}{i}  q^{(i+1)i}
      - \gauss{n-k-1}{i-1} \gauss{k-1}{i-1} q^{i(i-1)}.
  \end{align*}
\end{proof}

For a given $k$-dimensional subspaces $C$ we have
\begin{align*}
  &\sum_{\dim(S \cap C) = k-i} b_S = (A_i b)_{C} \\
  &= 
  \left( \gauss{n-k-1}{i} \gauss{k-1}{i}  q^{(i+1)i}
      - \gauss{n-k-1}{i-1} \gauss{k-1}{i-1} q^{i(i-1)} \right) b_C,
\end{align*}
which makes these eigenvalues very useful.
In particular, for $A_{k-1}$ we get the following result.
Notice that this number was directly calculated by Chowdhury et al. in \cite[Equation (3.15)]{Chowdhury2013}
and that we adopted their presentation of the formula.

\begin{lemma}\label{lem_ev_A1}
  Let $n \geq 2k$.
  Let $C$ be a $k$-dimensional subspace of $V$. Then we have
  \begin{align*}
    \sum_{\dim(S \cap C) = 1} b_S = \left( q^{k(k-1)} \gauss{n-k-1}{k-1} 
    - q^{(k-1)(k-2)} \gaussm{k-1} \gauss{n-k-1}{k-2}\right) b_C.
  \end{align*}
\end{lemma}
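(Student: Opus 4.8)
The plan is to obtain Lemma \ref{lem_ev_A1} as the special case $i = k-1$ of the eigenvalue formula established immediately above, together with the displayed identity $\sum_{\dim(S\cap C)=k-i} b_S = (A_ib)_C = \lambda_i b_C$ that follows from it, where
\begin{align*}
  \lambda_i = \gauss{n-k-1}{i} \gauss{k-1}{i} q^{(i+1)i} - \gauss{n-k-1}{i-1} \gauss{k-1}{i-1} q^{i(i-1)}.
\end{align*}
Since the condition $\dim(S \cap C) = 1$ is exactly $k - i = 1$, i.e. $i = k-1$, I would simply substitute $i = k-1$ into this expression.

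With $i = k-1$ the exponents become $(i+1)i = k(k-1)$ and $i(i-1) = (k-1)(k-2)$. For the Gaussian coefficients, $\gauss{k-1}{i} = \gauss{k-1}{k-1} = 1$, and by \eqref{eq_gauss_compl} one has $\gauss{k-1}{i-1} = \gauss{k-1}{k-2} = \gauss{k-1}{1} = \gaussm{k-1}$, while $\gauss{n-k-1}{i} = \gauss{n-k-1}{k-1}$ and $\gauss{n-k-1}{i-1} = \gauss{n-k-1}{k-2}$ are already in the claimed form. Feeding these into $\lambda_{k-1} b_C$ yields
\begin{align*}
  \sum_{\dim(S \cap C) = 1} b_S = \left( q^{k(k-1)} \gauss{n-k-1}{k-1} - q^{(k-1)(k-2)} \gaussm{k-1} \gauss{n-k-1}{k-2} \right) b_C,
\end{align*}
which is the assertion. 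The hypothesis $n \geq 2k$ only serves to guarantee $n-k-1 \geq k-1$, so that both top Gaussian coefficients lie in the nontrivial range and the sum on the left is over a genuinely occurring intersection pattern; it plays no further role.

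There is essentially no obstacle here: all the work has already been done in the preceding lemma (whose proof rewrote Vanhove's formula via \eqref{eq_gauss_compl} and \eqref{eq_gauss_rec}), and this statement is a one-line specialization isolated for later reference. As a consistency check one can compare the resulting eigenvalue with the value computed directly by Chowdhury et al. in \cite[Equation (3.15)]{Chowdhury2013}, whose presentation we have adopted; the two agree.
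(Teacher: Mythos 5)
Your proposal is correct and is exactly the paper's (implicit) argument: the lemma is obtained by specializing the preceding eigenvalue formula to $i=k-1$, and your substitutions $\gauss{k-1}{k-1}=1$ and $\gauss{k-1}{k-2}=\gaussm{k-1}$ reproduce the stated expression. No issues.
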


Let $A$ be one of the $k$-dimensional subspaces with $b_A = \max b_S$ (i.e. a 
$k$-dimensional subspace with the highest weight).
The idea of this section is
to reach a situation where we can apply Lemma \ref{lem_bad_config_intersection_bound}
on a large bad configuration.
We shall do this in several steps. In Lemma \ref{lem_lower_bounds_bCi} we show
that we are able to find a lot of nonnegative $k$-dimensional subspaces which intersect $A$
in a $1$-dimensional subspace and have a weight of nearly $b_A$ for large $q$.
Lemma \ref{lem_all_M_intersection_bnd} then shows that many of these nonnegative
$k$-dimensional subspaces pairwise intersect in exactly a $1$-dimensional subspace,
which leads to a situation where we can apply Lemma \ref{lem_bad_config_intersection_bound}.

\begin{lemma}\label{lem_A1_intersection}
  Let $n \geq 2k+1$.
  Let $A$ denote a highest weight $k$-dimensional subspace of $V$.
  Let $C$ be a nonnegative $k$-dimensional subspace of $V$. Then at least
  \begin{align*}
    \left( 1 - \frac{3}{q^{n-2k+1}} \right) \gauss{n-1}{k-1} \frac{b_C}{b_A}
  \end{align*}
  nonnegative $k$-dimensional subspaces intersect $C$ in exactly a $1$-dimensional subspace.
\end{lemma}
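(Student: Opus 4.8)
The plan is to count, for the fixed highest-weight subspace $A$ and the given nonnegative subspace $C$, the $k$-dimensional subspaces $S$ with $\dim(S\cap C)=1$ and then subtract off those among them that fail to be nonnegative. First I would apply Lemma \ref{lem_ev_A1} to $C$: since $b_C\geq 0$ and the eigenvalue is a positive quantity (for $n\geq 2k+1$ the term $q^{k(k-1)}\gauss{n-k-1}{k-1}$ dominates $q^{(k-1)(k-2)}\gaussm{k-1}\gauss{n-k-1}{k-2}$), we get
\begin{align*}
  \sum_{\dim(S\cap C)=1} b_S = \left( q^{k(k-1)}\gauss{n-k-1}{k-1} - q^{(k-1)(k-2)}\gaussm{k-1}\gauss{n-k-1}{k-2}\right) b_C.
\end{align*}
The coefficient here should, after using Lemma \ref{lem_bnd_nk} (with a suitable choice such as $a=k$, taking care that the hypothesis $k\leq a\leq n-k$ holds because $n\geq 2k$), be shown to be at least $\left(1-\tfrac{c}{q^{n-2k+1}}\right)\gauss{n-1}{k-1}$ for a small constant $c$; this is the step where I expect most of the bookkeeping to live, since one must compare $q^{k(k-1)}\gauss{n-k-1}{k-1}$ against $\gauss{n-1}{k-1}$ and absorb the negative correction term. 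Here the complementation identity \eqref{eq_gauss_compl} rewrites $\gauss{n-k-1}{k-1}$ appropriately so that Lemma \ref{lem_bnd_nk} applies directly.

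Next I would bound the total negative contribution from below. Every $S$ with $\dim(S\cap C)=1$ satisfies $b_S \leq b_A$ by the maximality of $A$, and the negative ones satisfy $b_S < 0$. Write $N$ for the number of nonnegative $S$ with $\dim(S\cap C)=1$ and let $\mathcal{N}$ be the set of negative such $S$. Then
\begin{align*}
  \left(1-\tfrac{c}{q^{n-2k+1}}\right)\gauss{n-1}{k-1} b_C \leq \sum_{\dim(S\cap C)=1} b_S \leq N\cdot b_A + \sum_{S\in\mathcal{N}} b_S \leq N\cdot b_A,
\end{align*}
so $N \geq \left(1-\tfrac{c}{q^{n-2k+1}}\right)\gauss{n-1}{k-1}\,\tfrac{b_C}{b_A}$, which is exactly the claimed bound provided the constant $c$ comes out to be at most $3$. (Note $b_A>0$: since $\sum_P f(P)=0$ and $f\not\equiv 0$, some subspace has positive weight, so the maximum weight $b_A$ is strictly positive; and $b_C\geq 0$ by hypothesis, so the division is legitimate.)

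The main obstacle is the constant-tracking in the first paragraph: one must verify that the eigenvalue coefficient in Lemma \ref{lem_ev_A1}, divided by $\gauss{n-1}{k-1}$, is at least $1-3/q^{n-2k+1}$. This requires combining the recursion \eqref{eq_gauss_rec}, the identity \eqref{eq_gauss_compl}, and Lemma \ref{lem_bnd_nk}, and checking that the subtracted term $q^{(k-1)(k-2)}\gaussm{k-1}\gauss{n-k-1}{k-2}$ together with the slack from Lemma \ref{lem_bnd_nk} does not eat more than a $3/q^{n-2k+1}$ fraction; the exponent $n-2k+1$ on the right-hand side is precisely what one gets from Lemma \ref{lem_bnd_nk} with $a=k$, namely $q^{-(n-k-a+1)}=q^{-(n-2k+1)}$, so the shape matches and only the numerical constant needs care. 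Everything else is a short argument using the two trivial inequalities $b_S\leq b_A$ and $b_C\geq 0$.
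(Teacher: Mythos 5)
Your proposal follows the paper's proof essentially verbatim: apply Lemma \ref{lem_ev_A1} to $C$, use $b_S\leq b_A$ together with $b_C\geq 0$ to convert the weighted sum into a count, bound $q^{k(k-1)}\gauss{n-k-1}{k-1}$ below by $(1-2/q^{n-2k+1})\gauss{n-1}{k-1}$ via Lemma \ref{lem_bnd_nk} with $a=k$, and absorb the subtracted eigenvalue term into the remaining $1/q^{n-2k+1}$ budget. The one piece you defer --- checking that $q^{(k-1)(k-2)}\gaussm{k-1}\gauss{n-k-1}{k-2} < q^{-(n-2k+1)}\gauss{n-1}{k-1}$ --- is exactly the paper's computation \eqref{eq_ac_km_bnd} and is a routine product estimate, so the argument is correct and complete in structure.
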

\begin{proof}
  By Lemma \ref{lem_ev_A1}, $\sum_{\dim(S \cap C) = 1} b_S = $
  \begin{align*}
    \left( q^{k(k-1)} \gauss{n-k-1}{k-1} 
    - q^{(k-1)(k-2)} \gaussm{k-1} \gauss{n-k-1}{k-2}\right) b_C.
  \end{align*}
  Each $b_S$ is less than or equal to $b_A$ which yields at least
  \begin{align*}
    \left( q^{k(k-1)} \gauss{n-k-1}{k-1} 
    - q^{(k-1)(k-2)} \gaussm{k-1} \gauss{n-k-1}{k-2}\right) \frac{b_C}{b_A}
  \end{align*}
  nonnegative $k$-dimensional subspaces that intersect $C$ in exactly a $1$-dimensional subspace.
  As $n \geq 2k+1$, we have
  \begin{align}
    &\frac{q^{(k-1)(k-2)} \gaussm{k-1} \gauss{n-k-1}{k-2}}{\gauss{n-1}{k-1}}\label{eq_ac_km_bnd}\\ \nonumber
    &= q^{(k-1)(k-2)} \gaussm{k-1} \frac{q^{k-1}-1}{q^{n-k+1}-1} \prod_{i=1}^{k-2} \frac{q^{n-k-i}-1}{q^{n-i}-1}\\ \nonumber
    &< q^{(k-1)^2} q^{-n+2k-2} q^{-(k-2)k}\\ \nonumber
    &\leq \frac{1}{q^{n-2k+1}}.
  \end{align}
  Then Lemma \ref{lem_bnd_nk} (with $a=k$) shows the assertion.
\end{proof}

\begin{lemma}\label{lem_lower_bounds_bCi}
  Let $n \geq 2k+1$. Let $c$ be a real number with $3 \leq c \leq q$.
  Let $A$ denote a highest weight $k$-dimensional subspace of $V$.
  Let $C_i$ denote the $i$-th highest weight $k$-dimensional subspace of $V$ such 
  that $\dim(A \cap C_i) = 1$. Suppose $i \leq \frac{c-3}{q} \gauss{n-1}{k-1} + 1$, and
  suppose that there are at most $\gauss{n-1}{k-1}$ nonnegative
  $k$-dimensional subspaces of $V$, then $b_{C_i}$, the weight of $C_i$, satisfies 
  \begin{align*}
    b_{C_i} > \left( 1 - \frac{c}{q} \right) b_A
  \end{align*}
\end{lemma}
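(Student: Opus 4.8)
The plan is to argue by contradiction: suppose $b_{C_i} \leq (1 - c/q) b_A$ for some index $i$ with $i \leq \frac{c-3}{q}\gauss{n-1}{k-1} + 1$. Since the $C_j$ are ordered by decreasing weight, this means $b_{C_j} \leq (1-c/q) b_A$ for \emph{all} $j \geq i$, and in particular there are at least $i$ many $k$-dimensional subspaces meeting $A$ in a $1$-dimensional subspace whose weight is at most $(1-c/q)b_A$, namely $C_1, \dots, C_i$ (using monotonicity the other way, the first $i-1$ of them could have large weight, so more carefully: there are at least one such subspace, $C_i$ itself, and the count of subspaces $C_j$ with $j\geq i$ having small weight is large). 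The cleanest route: apply Lemma \ref{lem_A1_intersection} with $C = A$ (note $b_A = b_A$, so the ratio is $1$), which guarantees at least $\left(1 - \frac{3}{q^{n-2k+1}}\right)\gauss{n-1}{k-1}$ nonnegative $k$-dimensional subspaces intersecting $A$ in exactly a line. All of these are among the $C_j$'s.

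Next I would use the hypothesis that there are at most $\gauss{n-1}{k-1}$ nonnegative $k$-dimensional subspaces \emph{in total}. Combined with Lemma \ref{lem_ev_A1} applied to $C = A$: the sum $\sum_{\dim(S\cap A)=1} b_S$ equals $\lambda b_A$ where $\lambda = q^{k(k-1)}\gauss{n-k-1}{k-1} - q^{(k-1)(k-2)}\gaussm{k-1}\gauss{n-k-1}{k-2}$, and by the estimate \eqref{eq_ac_km_bnd} together with Lemma \ref{lem_bnd_nk} (with $a=k$) one has $\lambda \geq \left(1 - \frac{3}{q^{n-2k+1}}\right)\gauss{n-1}{k-1}$, i.e. $\lambda$ is very close to $\gauss{n-1}{k-1}$ for large $q$. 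Now split the subspaces $S$ with $\dim(S\cap A)=1$ into those with $b_S > (1-c/q)b_A$ and those with $b_S \leq (1-c/q)b_A$. The negative-weight ones among the latter contribute nonpositively to the sum, and the nonnegative ones contribute at most $(1-c/q)b_A$ each. If there were at least $i$ of the $C_j$'s (hence at least $i$ nonnegative subspaces meeting $A$ in a line) with weight $\leq (1-c/q)b_A$, I would bound
\begin{align*}
  \lambda b_A = \sum_{\dim(S\cap A)=1} b_S \leq \left(\#\{S : b_S > (1-c/q)b_A\}\right) b_A + i\left(1 - \frac{c}{q}\right)b_A - (\text{negative part}),
\end{align*}
and since the total number of nonnegative subspaces is at most $\gauss{n-1}{k-1}$, the first count is at most $\gauss{n-1}{k-1} - i$. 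Dividing by $b_A > 0$ yields $\lambda \leq (\gauss{n-1}{k-1} - i) + i(1 - c/q) = \gauss{n-1}{k-1} - \frac{ci}{q}$. Comparing with $\lambda \geq \left(1 - \frac{3}{q^{n-2k+1}}\right)\gauss{n-1}{k-1} \geq \gauss{n-1}{k-1} - \frac{3}{q}\gauss{n-1}{k-1}$ (using $n \geq 2k+1$ so $q^{n-2k+1} \geq q$) forces $\frac{ci}{q} \leq \frac{3}{q}\gauss{n-1}{k-1}$, i.e. $i \leq \frac{3}{c}\gauss{n-1}{k-1}$; combined with the hypothesis on $i$ this should produce a contradiction once one checks the arithmetic with $c \geq 3$. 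I need to be careful to rephrase this so that ``at least $i$ of the $C_j$ have small weight'' really follows from $b_{C_i} \leq (1-c/q)b_A$ — it does, since $C_i, C_{i+1}, \dots$ all have weight $\leq b_{C_i}$, and there are enough indices because Lemma \ref{lem_A1_intersection} guarantees roughly $\gauss{n-1}{k-1}$ subspaces $C_j$ exist in total.

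The main obstacle will be bookkeeping the negative-weight subspaces and the exact constants: the bound $b_S \leq b_A$ is used for the ``large weight'' bucket, the bound $b_S \leq (1-c/q)b_A$ for the ``small but nonnegative'' bucket, and one must confirm that discarding negative terms only helps. A secondary subtlety is translating the condition $i \leq \frac{c-3}{q}\gauss{n-1}{k-1}+1$ into the right inequality: the ``$+1$'' accounts for the fact that we only need $C_i$ itself (one subspace) to have small weight to reach the contradiction, so the argument should be run with the count $i-1$ in the place where I wrote $i$ above, or equivalently with a strict inequality; sorting out whether the extremal case is $i$ or $i-1$ and where the slack $\frac{3}{q^{n-2k+1}}$ versus $\frac{3}{q}$ absorbs it is the delicate point. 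Everything else is a routine substitution of the eigenvalue formula and the Gaussian-coefficient estimates already established in Lemmas \ref{lem_bnd_nk} and \ref{lem_A1_intersection}.
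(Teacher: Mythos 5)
Your overall strategy --- apply the eigenvalue identity of Lemma \ref{lem_ev_A1} to $C=A$, estimate the eigenvalue $\lambda$ from below by $\left(1-\frac{3}{q^{n-2k+1}}\right)\gauss{n-1}{k-1}$ via Lemma \ref{lem_bnd_nk} and \eqref{eq_ac_km_bnd}, and play this off against a term-by-term upper bound on $\sum_{\dim(S\cap A)=1}b_S$ using the cap of $\gauss{n-1}{k-1}$ on the number of nonnegative subspaces --- is exactly the paper's. But the counting in your split is turned the wrong way round, and as a result the final step does not close. You bound the sum by $N_b\, b_A+i\left(1-\frac{c}{q}\right)b_A$ with $N_b\le\gauss{n-1}{k-1}-i$, obtaining $\lambda\le\gauss{n-1}{k-1}-\frac{ci}{q}$ and hence $i\le\frac{3}{c}\gauss{n-1}{k-1}$. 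This is an \emph{upper} bound on $i$; the hypothesis $i\le\frac{c-3}{q}\gauss{n-1}{k-1}+1$ is also an upper bound on $i$, and two upper bounds cannot contradict each other (both hold, for instance, for $i=1$). So no contradiction is reached. A secondary issue: your input ``$N_s\ge i$'', where $N_s$ counts the nonnegative small-weight subspaces, is not justified when $c$ is close to $q$, since then $i$ may be nearly as large as $\gauss{n-1}{k-1}$ while Lemma \ref{lem_A1_intersection} only supplies about $\gauss{n-1}{k-1}-(i-1)$ candidates.

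The repair is to let the \emph{small} bucket, not the large one, carry the argument: at most $i-1$ of the subspaces meeting $A$ in a line can have weight exceeding $b_{C_i}$ (namely $C_1,\dots,C_{i-1}$), each contributing at most $b_A$; every other term is at most $b_{C_i}$, at most $\gauss{n-1}{k-1}$ of them are nonnegative, and the remaining terms are negative. Hence
\begin{align*}
\lambda b_A=\sum_{\dim(S\cap A)=1}b_S\le (i-1)b_A+\gauss{n-1}{k-1}b_{C_i},
\end{align*}
which gives directly, with no contradiction argument,
\begin{align*}
b_{C_i}\ge\frac{\lambda-i+1}{\gauss{n-1}{k-1}}\,b_A .
\end{align*}
Now $\lambda\ge\left(1-\frac{3}{q^{n-2k+1}}\right)\gauss{n-1}{k-1}$ together with $i-1\le\frac{c-3}{q}\gauss{n-1}{k-1}\le\left(\frac{c}{q}-\frac{3}{q^{n-2k+1}}\right)\gauss{n-1}{k-1}$ yields $\lambda-i+1\ge\left(1-\frac{c}{q}\right)\gauss{n-1}{k-1}$, which is the claim (strictness coming from the strict inequality in \eqref{eq_ac_km_bnd}). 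This is precisely the paper's proof; your version differs from it only in which bucket is bounded trivially, and that choice is what breaks the argument.
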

\begin{proof}
  By Lemma \ref{lem_ev_A1} and $b_{C_i} \leq b_A$, we have
  \begin{align*}
    &\sum_{j \geq i} b_{C_j} = \sum b_{C_j} - \sum_{j < i} b_{C_j}
    \geq b_A \cdot \\ & \left( q^{k(k-1)} \gauss{n-k-1}{k-1} 
    - q^{(k-1)(k-2)} \gaussm{k-1} \gauss{n-k-1}{k-2} - i + 1\right).
  \end{align*}
  We suppose that we have at most $\gauss{n-1}{k-1}$ nonnegative $k$-dimensional
  subspaces. Hence, we find
  \begin{align}
    b_{C_i} \geq \frac{q^{k(k-1)} \gauss{n-k-1}{k-1} 
    - q^{(k-1)(k-2)} \gaussm{k-1} \gauss{n-k-1}{k-2} - i + 1}{\gauss{n-1}{k-1}}.\label{eq_bCi_bnd}
  \end{align}
  By hypothesis $i \leq \frac{c-3}{q} \gauss{n-1}{k-1} + 1 \leq \left(\frac{c}{q} - \frac{3}{q^{n-2k+1}}\right) \gauss{n-1}{k-1} + 1$, 
  $n \geq 2k+1$, and \eqref{eq_ac_km_bnd},
  we have
  \begin{align}
    \nonumber &q^{(k-1)(k-2)} \gaussm{k-1} \gauss{n-k-1}{k-2} + i - 1 \\&\leq \left(\frac{c}{q} - \frac{2}{q^{n-2k+1}}\right) \gauss{n-1}{k-1}.\label{eq_bCi_final}
  \end{align}
  By Lemma \ref{lem_bnd_nk}, 
  \begin{align*}
     q^{k(k-1)} \gauss{n-k-1}{k-1} \geq \left( 1 - \frac{2}{q^{n-2k+1}} \right) \gauss{n-1}{k-1}.
  \end{align*}
  Hence, \eqref{eq_bCi_final} and \eqref{eq_bCi_bnd} yield assertion.
\end{proof}

\begin{lemma}\label{lem_all_M_intersection_bnd}
  Let $n \geq 2k+\delta \geq 2k+1$. Let $c$ be a real number with $3 \leq c \leq q$.
  Let $q \geq 3$.
  Let $A$ denote a highest weight $k$-dimensional subspace of $V$.
  Let $C_i$ denote the $i$-th highest weight $k$-dimensional subspace of $V$ such 
  that $\dim(A \cap C_i) = 1$.
  Let $I$ be a subset of $\{ 1, \ldots, \lfloor \frac{c-3}{q} \gauss{n-1}{k-1} + 1 \rfloor\}$ with $|I| \leq k-1$.
  Set $x = |I|+1$.
  Set $M := \{A \} \cup \{ C_i: i \in I \}$.
  Suppose that there are at most $\gauss{n-1}{k-1}$ nonnegative $k$-dimensional 
  subspaces. 
  Then we have the following.
  \begin{enumerate}[(a)]
   \item At least 
    \begin{align*}
      \left( 1 - \frac{(x-1)c}{q} - \frac{3x}{q^{n-2k+1}} \right) \gauss{n-1}{k-1}
    \end{align*}
  nonnegative $k$-dimensional subspaces intersect each element of $M$ in exactly a
  $1$-dimensional subspace.
   \item Suppose that $M$ is a bad configuration. Suppose $x > 1$ with $(x-1)n \geq (2x-1)k-x+\delta$.
  Then there exist $S, R \in M$ such that the $1$-dimensional subspace $S \cap R$
  lies in at least
  \begin{align*}
    \left( 1 - \frac{(x-1)c}{q} - \frac{3x}{q^{n-2k+1}} 
    - x^2 \cdot 2^{x} q^{-\delta} \right) \frac{\gauss{n-1}{k-1}}{\binom{x}{2}}
  \end{align*}
 nonnegative $k$-dimensional subspaces.
  \end{enumerate}
\end{lemma}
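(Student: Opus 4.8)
The plan is to obtain (a) by combining Lemma~\ref{lem_A1_intersection} with a Bonferroni-type union bound, and then to deduce (b) by classifying the subspaces produced in (a) according to whether or not they meet $M$ badly and invoking Lemma~\ref{lem_bad_config_intersection_bound}.

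For part (a), write $N$ for the set of nonnegative $k$-dimensional subspaces, so $|N| \leq \gauss{n-1}{k-1}$ by hypothesis, and for each $D \in M$ let $G_D \subseteq N$ be the set of nonnegative $k$-dimensional subspaces meeting $D$ in exactly a $1$-dimensional subspace. First I would bound each $|G_D|$ from below: applying Lemma~\ref{lem_A1_intersection} with $C = A$ gives $|G_A| \geq (1 - 3q^{-(n-2k+1)})\gauss{n-1}{k-1}$, and applying it with $C = C_i$ for $i \in I$ gives $|G_{C_i}| \geq (1 - 3q^{-(n-2k+1)})\gauss{n-1}{k-1}\, b_{C_i}/b_A$; since $i \leq \lfloor \tfrac{c-3}{q}\gauss{n-1}{k-1} + 1\rfloor$, Lemma~\ref{lem_lower_bounds_bCi} applies and gives $b_{C_i}/b_A > 1 - c/q$, hence $|G_{C_i}| \geq (1 - \tfrac{c}{q} - \tfrac{3}{q^{n-2k+1}})\gauss{n-1}{k-1}$. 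Then I would use the elementary inequality $|\bigcap_{D\in M} G_D| \geq \sum_{D\in M}|G_D| - (|M|-1)|N|$, which holds because each $G_D \subseteq N$, together with $|M| = x$ and the crude bound $|N| \leq \gauss{n-1}{k-1}$ on the subtracted term; collecting the terms $1 - 3q^{-(n-2k+1)}$ once and $1 - \tfrac{c}{q} - \tfrac{3}{q^{n-2k+1}}$ $(x-1)$ times, and subtracting $(x-1)\gauss{n-1}{k-1}$, yields exactly the bound claimed in (a).

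For part (b), set $P := \bigcap_{D\in M} G_D$, so $|P| \geq \bigl(1 - \tfrac{(x-1)c}{q} - \tfrac{3x}{q^{n-2k+1}}\bigr)\gauss{n-1}{k-1}$ by (a), and partition $P = P_1 \cup P_2$ where $P_1$ consists of those $B \in P$ that meet $M$ badly. Since $M$ is a bad configuration of size $x$ and the hypotheses $q \geq 3$, $n \geq 2k+\delta \geq 2k+1$, $1 < x \leq k$, and $(x-1)n \geq (2x-1)k - x + \delta$ all hold, Lemma~\ref{lem_bad_config_intersection_bound} gives $|P_1| \leq x^2\cdot 2^x q^{-\delta}\gauss{n-1}{k-1}$, hence $|P_2| \geq \bigl(1 - \tfrac{(x-1)c}{q} - \tfrac{3x}{q^{n-2k+1}} - x^2\cdot 2^x q^{-\delta}\bigr)\gauss{n-1}{k-1}$. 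The key observation is that every $B \in P_2$ already meets each element of $M$ in a $1$-dimensional subspace (as $B \in P$), so the only way it can fail to meet $M$ badly is that $\dim(S \cap R \cap B) \geq 1$ for some distinct $S, R \in M$; since $S \cap R$ is $1$-dimensional ($M$ being a bad configuration) and contains the nonzero subspace $S\cap R\cap B$, this forces $S \cap R \subseteq B$. Therefore each $B \in P_2$ contains $S\cap R$ for at least one of the $\binom{x}{2}$ pairs $\{S,R\}\subseteq M$, so $\sum_{\{S,R\}} |\{B\in P_2 : S\cap R\subseteq B\}| \geq |P_2|$, and an averaging argument produces a fixed pair $\{S,R\}$ whose $1$-dimensional intersection lies in at least $|P_2|/\binom{x}{2}$ nonnegative $k$-dimensional subspaces, which is the bound in (b).

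The geometric content — controlling how many $k$-dimensional subspaces can meet a bad configuration badly — has already been packaged in Lemma~\ref{lem_bad_config_intersection_bound}, so the main work here is bookkeeping: checking that the ranges of the indices $i$ and the inequality $(x-1)n \geq (2x-1)k - x + \delta$ put us within the scope of the cited lemmas, and keeping the direction of the Bonferroni inequality correct so that the weak bound $|N| \leq \gauss{n-1}{k-1}$ is applied only to the term being subtracted. The one genuinely substantive step, apart from those citations, is the combinatorial observation in (b) that, once all pairwise intersections with $M$ are $1$-dimensional, failing to meet $M$ badly is equivalent to containing one of the $\binom{x}{2}$ fixed lines $S\cap R$.
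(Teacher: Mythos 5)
Your proof is correct and follows essentially the same route as the paper: part (a) is the paper's induction on $x$ unrolled into a single Bonferroni/sieve bound built from Lemmas \ref{lem_A1_intersection} and \ref{lem_lower_bounds_bCi}, and part (b) is the same classification of the subspaces from (a) into those meeting $M$ badly (bounded by Lemma \ref{lem_bad_config_intersection_bound}) and those forced to contain one of the $\binom{x}{2}$ lines $S \cap R$, followed by averaging over the pairs. No changes needed.
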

\begin{proof}
  We write $M = \{ M_1, \ldots, M_x \}$ with $M_1 = A$.
  
  First we shall show by induction on $x \geq 1$ that at least
  \begin{align}
    \left( 1 - \frac{(x-1)c}{q} - \frac{3x}{q^{n-2k+1}} \right) \gauss{n-1}{k-1}\label{eq_induction_hyp}
  \end{align}
  nonnegative $k$-subspaces intersect all elements of $\{ M_1, \ldots, M_{x} \}$
  in exactly a $1$-dimensional subspace.
  
  For $A$ we find by applying Lemma \ref{lem_A1_intersection} that $A$ meets at least
  \begin{align}
    \left( 1 - \frac{3}{q^{n-2k+1}} \right) \gauss{n-1}{k-1}
  \end{align}
  nonnegative $k$-dimensional subspaces in a $1$-dimensional subspace. This shows \eqref{eq_induction_hyp} for $x=1$.
  
  Now suppose $x > 1$. 
  By hypothesis, there are at most $\gauss{n-1}{k-1}$ nonnegative $k$-dimensional subspaces.
  So by Lemma \ref{lem_A1_intersection}, Lemma \ref{lem_lower_bounds_bCi}, \eqref{eq_induction_hyp}, 
  and the sieve principle, we find that at least
    \begin{align}
    &\left( 1 - \frac{(x-1)c}{q} - \frac{3x}{q^{n-2k+1}} \right) \gauss{n-1}{k-1}\notag\\ 
    + &\left( 1 - \frac{c}{q} - \frac{3}{q^{n-2k+1}} \right) \gauss{n-1}{k-1} - \gauss{n-1}{k-1}\label{eq_lower_bnd_Ci_A1}\\
    \geq &\left( 1 - \frac{xc}{q} - \frac{3(x+1)}{q^{n-2k+1}} \right) \gauss{n-1}{k-1}\notag
    \end{align}
  nonnegative $k$-dimensional subspaces intersect all elements of the set $\{ M_1, \ldots, M_x \}$
  in exactly a $1$-dimensional subspace. This shows (a).
  
  By Lemma \ref{lem_bad_config_intersection_bound} and \eqref{eq_induction_hyp},
  we have that at least
  \begin{align}
    \left( 1 - \frac{(x-1)c}{q} - \frac{3x}{q^{n-2k+1}} 
    - x^2 \cdot 2^{x} q^{-\delta} \right) \gauss{n-1}{k-1}
  \end{align}
  nonnegative $k$-dimensional subspaces intersect all elements of $M$ in exactly a
  $1$-dimensional subspace and contain a $1$-dimensional subspace of the form 
  $M_i \cap M_j$, $i \neq j$. As we have at most $\binom{x}{2}$ such $1$-dimensional 
  subspaces $M_i \cap M_j$, this shows (b).
\end{proof}

\begin{lemma}\label{lem_there_exists_bad_configuration}
  Let $n \geq 2k+1$. Let $x$ be a number with $2 \leq x \leq k$. Let $q > (x-1)! \cdot 2^{x+2}$.
  Let $A$ denote a highest weight $k$-dimensional subspace of $V$.
  Let $C_i$ denote the $i$-th highest weight $k$-dimensional subspace of $V$ such 
  that $\dim(A \cap C_i) = 1$.
  Suppose that there are at most $\gauss{n-1}{k-1}$ nonnegative $k$-dimensional 
  subspaces of $V$.
  Suppose that no $1$-dimensional subspace is contained in more than 
  $\frac{3}{q} \gauss{n-1}{k-1}$ nonnegative $k$-dimensional subspaces.
  Then $\tilde{M}_{x} := \{ A \} \cup \{ C_i: i \leq \frac{(x-2)! \cdot 2^{x+1}-3}{q} \gauss{n-1}{k-1} + 1\}$ contains
  a bad configuration $M$ with $x$ elements and $A \in M$.
\end{lemma}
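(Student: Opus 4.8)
The plan is to build $M$ greedily, enlarging $\{A\}$ one subspace at a time and always taking the new member from among the $C_i$ of small index. The crucial device is an \emph{increasing} family of thresholds: set $c_t:=(t-1)!\cdot 2^{t+2}$ for $1\le t\le x-1$, so that $3\le 8=c_1\le c_2\le\dots\le c_{x-1}=(x-2)!\cdot 2^{x+1}$, the last value being exactly the one occurring in $\tilde M_x$; moreover $c_{x-1}<q$ because $q>(x-1)!\cdot 2^{x+2}=2(x-1)\,c_{x-1}$, so $3\le c_t\le q$ for every $t$. Put $j_t:=\lfloor\frac{c_t-3}{q}\gauss{n-1}{k-1}+1\rfloor\ (\ge 1)$, a non-decreasing sequence. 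I will construct a chain of bad configurations $\{A\}=M^{(1)}\subsetneq M^{(2)}\subsetneq\dots\subsetneq M^{(x)}$ with $M^{(t)}=\{A\}\cup\{C_{i_2},\dots,C_{i_t}\}$, where $i_s\le j_{s-1}$ for $2\le s\le t$; then $M:=M^{(x)}$ satisfies the assertion, since every index occurring is $\le j_{x-1}\le\frac{(x-2)!\cdot 2^{x+1}-3}{q}\gauss{n-1}{k-1}+1$, so $M\subseteq\tilde M_x$ and $A\in M$.

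The base of the recursion is $M^{(1)}:=\{A\}$ and $M^{(2)}:=\{A,C_1\}$; the latter is a bad configuration because its only non-vacuous requirement, $\dim(A\cap C_1)=1$, holds by the definition of $C_1$, and $i_2=1\le j_1$. Now fix $2\le t<x$ and suppose $M^{(t)}=\{A=M_1,M_2,\dots,M_t\}$ has been built. Unwinding the definitions, a $k$-dimensional subspace $C$ may be taken as $M_{t+1}$ exactly when (i)~$\dim(C\cap M_j)=1$ for every $j\le t$ and (ii)~$C$ contains none of the at most $\binom{t}{2}$ one-dimensional subspaces $M_j\cap M_\ell$ with $j\neq\ell$; these two conditions are precisely what makes $M^{(t)}\cup\{C\}$ a bad configuration, and (i) forces $C\notin\{M_1,\dots,M_t\}$ since $k\ge x\ge 2$. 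So it suffices to find such a $C$ among $C_1,\dots,C_{j_t}$. Each of these is a nonnegative $k$-dimensional subspace: Lemma \ref{lem_lower_bounds_bCi} (with $c=c_t$, legitimate since $3\le c_t\le q$, $n\ge 2k+1$, and $i\le j_t\le\frac{c_t-3}{q}\gauss{n-1}{k-1}+1$) gives its weight $>(1-c_t/q)\,b_A\ge 0$, where $b_A\ge 0$ because $\sum_S b_S=\gauss{n-1}{k-1}\sum_{P\in\scrP}f(P)=0$ forbids all $b_S$ from being negative.

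Now I bound the number of $C_i$, $i\le j_t$, failing (i) or (ii). Apply Lemma \ref{lem_all_M_intersection_bnd}(a) to $M^{(t)}$ with parameter $c_{t-1}$; this is legitimate because the indices $i_2,\dots,i_t$ are all $\le j_{t-1}=\lfloor\frac{c_{t-1}-3}{q}\gauss{n-1}{k-1}+1\rfloor$, because $3\le c_{t-1}\le q$, and because $t-1\le x-2\le k-1$. It yields at least $\bigl(1-\tfrac{(t-1)c_{t-1}}{q}-\tfrac{3t}{q^{n-2k+1}}\bigr)\gauss{n-1}{k-1}$ nonnegative $k$-dimensional subspaces meeting every element of $M^{(t)}$ in exactly a $1$-dimensional subspace; since there are at most $\gauss{n-1}{k-1}$ nonnegative $k$-dimensional subspaces in all, at most $\bigl(\tfrac{(t-1)c_{t-1}}{q}+\tfrac{3t}{q^{n-2k+1}}\bigr)\gauss{n-1}{k-1}$ of them, hence at most that many of the $C_i$, fail (i). By the hypothesis that no $1$-dimensional subspace lies in more than $\tfrac{3}{q}\gauss{n-1}{k-1}$ nonnegative $k$-dimensional subspaces, at most $\binom{t}{2}\cdot\tfrac{3}{q}\gauss{n-1}{k-1}$ of the $C_i$ fail (ii). As there are at least $j_t\ge\tfrac{c_t-3}{q}\gauss{n-1}{k-1}$ candidates, a usable $C_i$ exists whenever
\[
  c_t-3\;>\;(t-1)c_{t-1}+\frac{3tq}{q^{n-2k+1}}+3\binom{t}{2}.
\]
Since $n\ge 2k+1$ gives $\frac{3tq}{q^{n-2k+1}}\le\frac{3t}{q}<1$, and since $(t-1)c_{t-1}=(t-1)!\cdot 2^{t+1}=\tfrac12 c_t$, this reduces to $(t-1)!\cdot 2^{t+1}>3+3\binom{t}{2}+\frac{3t}{q}$, which holds comfortably for all $t\ge 2$ (for $t=2$ it reads $8>6+\tfrac{6}{q}$, and the left side outgrows the right super-exponentially). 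Iterating the step for $t=2,\dots,x-1$ produces $M^{(x)}$, a bad configuration of size $x$ with $A\in M^{(x)}\subseteq\tilde M_x$, as desired.

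The delicate point is the calibration of the thresholds $c_t$: they must increase just fast enough that at each stage the pool $C_1,\dots,C_{j_t}$ outnumbers the $C_i$ excluded by the $t-1$ intersection conditions together with the $\binom{t}{2}$ forbidden lines, yet must terminate exactly at $(x-2)!\cdot 2^{x+1}$ so as to match the set $\tilde M_x$ appearing in the statement — the choice $c_t=(t-1)!\cdot 2^{t+2}$ achieves both. The accompanying bookkeeping, namely keeping the index bounds $i_s\le j_{s-1}$ so that Lemma \ref{lem_all_M_intersection_bnd}(a) stays applicable with an admissible parameter $c_{t-1}$ at every step, is routine but must be carried throughout.
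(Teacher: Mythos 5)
Your proof is correct and follows essentially the same route as the paper: an inductive/greedy extension of $\{A\}$ one subspace at a time, using Lemma \ref{lem_lower_bounds_bCi} to keep the candidate pool nonnegative, Lemma \ref{lem_all_M_intersection_bnd}(a) to control failures of the intersection condition, and the hypothesis on $1$-dimensional subspaces to control the forbidden lines $M_j\cap M_\ell$ — with thresholds $c_t=(t-1)!\cdot 2^{t+2}$ that coincide exactly with the paper's $\tilde M_{t+1}$. The only differences are presentational (explicit chain of index bounds versus the paper's induction on $x$), and the bookkeeping is carried out correctly.
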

\begin{proof}
  We shall prove our claim by induction on $x$.
  If $x = 1$, then $\{ A \}$ is a bad $x$-configuration.
  If $x = 2$, then $\{ A, C_1 \}$ is a bad $x$-configuration.
  
  Only the case $x > 2$ remains.
  Suppose that $\tilde{M}_{x}$
  contains a bad $x$-configuration $M = \{ M_1, \ldots, M_x \}$ with $A \in M$ and $x \geq 2$. 
  By Lemma \ref{lem_all_M_intersection_bnd} (a), at least 
  \begin{align*}
      \alpha := \left( 1 - \frac{(x-1)! \cdot 2^{x+1} }{q} - \frac{3x}{q^{n-2k+1}} \right) \gauss{n-1}{k-1}
  \end{align*}
  nonnegative $k$-dimensional subspaces intersect all elements of $M$ in exactly a
  $1$-dimensional subspace. By hypothesis, at most
  \begin{align*}
    \frac{3}{q} \binom{x}{2} \gauss{n-1}{k-1}
  \end{align*}
  nonnegative $k$-dimensional subspaces of $V$ contain one of the 
  $\binom{x}{2}$ $1$-dimensional subspaces
  $M_i \cap M_j$, $i \neq j$. So the number of nonnegative 
  $k$-dimensional subspaces which meet $M$ badly is, by definition, by $q \geq (x-1)! \cdot 2^{x+2}$, and by $n-2k+1 \geq 2$, at least
  \begin{align*}
    &\alpha - \frac{3}{q} \binom{x}{2} \gauss{n-1}{k-1}\\
    &= \left( 1 - \frac{(x-1)!  \cdot 2^{x+1} + 3\binom{x}{2} }{q} - \frac{3x}{q^{n-2k+1}} \right) \gauss{n-1}{k-1}\\
    &\geq \left( 1 - \frac{(x-1)! \cdot 2^{x+1} + 3\binom{x}{2} + 3x/q}{q} \right) \gauss{n-1}{k-1}\\
    &\geq \left( 1 - \frac{(x-1)!  \cdot 2^{x+1} + 3\binom{x}{2}+1}{q}\right) \gauss{n-1}{k-1} =: \beta.
  \end{align*}
  There are at most $\gauss{n-1}{k-1}$ nonnegative $k$-dimensional subspaces and,
  by Lemma \ref{lem_lower_bounds_bCi} and $q > (x-1)! \cdot 2^{x+2}$, all elements of
  $\tilde{M}_{x+1}$ have nonnegative weight, so
  at least
  \begin{align*}
    &\beta + |\tilde{M}_{x+1}| - \gauss{n-1}{k-1}\\
    \geq &\left( \frac{(x-1)! \cdot 2^{x+2} - 3 - (x-1)! \cdot 2^{x+2} - 3\binom{x}{2} - 1}{q} \right) \gauss{n-1}{k-1}\\
      = &\left( \frac{(x-1)! \cdot 2^{x+1} - 4 - 3\binom{x}{2}}{q} \right) \gauss{n-1}{k-1}
  \end{align*}
  nonnegative $k$-dimensional subspaces meet $M$ badly and are in $\tilde{M}_{x+1}$.
  For $x \geq 2$ this number is positive, so we will find a bad configuration of
  nonnegative $k$-dimensional subspaces in $\tilde{M}_{x+1}$.
\end{proof}

\section{Chowdhury, Sarkis, and Shahriari's Averaging Bound}

In \cite[Lemma 4.5]{Chowdhury2013} Chowdhury, Sarkis, and Shahriari apply a result 
by Beutelspacher \cite{Beutelspacher1979} on partial spreads of projective spaces.
They do not fully state what their proof shows which is why we have to restate 
their result here in a bit more detail. See \cite{Chowdhury2013} for the complete argument.

\begin{lemma}\label{lem_imp_section4_lemma}
  If $n = 2k+\delta$ with $0 \leq \delta < k$, and $T$ is a negative weight $k$-dimensional subspace, then there are at least 
  \begin{align*}
    \left( 1- \frac{2}{q} \right) \gauss{n-1}{k-1}
  \end{align*}
  nonnegative $k$-dimensional subspaces that have trivial intersection with $T$.
\end{lemma}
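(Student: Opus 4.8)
The plan is to rely on the one clean identity available for spreads: if $\mathcal{S}$ is a partition of the $1$-dimensional subspaces of a subspace $U$ into $k$-dimensional subspaces (a spread of $U$), then $\sum_{S\in\mathcal{S}} b_S = \sum_{P\subseteq U} f(P)$, since the members of $\mathcal{S}$ partition the $1$-dimensional subspaces of $U$. Together with $\sum_{P\in\scrP} f(P) = 0$ and $b_T<0$ this forces the non-$T$ members of suitable spreads through $T$ to have positive total weight; a double count over all such spreads then converts this into a lower bound on the number of nonnegative $k$-dimensional subspaces disjoint from $T$.

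First I would settle the case $\delta = 0$, i.e.\ $n = 2k$. Here $V$ itself has spreads into $k$-dimensional subspaces, every $k$-dimensional subspace (in particular $T$) lies in such a spread, and $\sum_{S\in\mathcal{S}} b_S = 0$ for each of them. As $b_T<0$, every spread $\mathcal{S}\ni T$ has at least one nonnegative member $S\neq T$, which is automatically disjoint from $T$. Since $\mathrm{GL}(V)$ is transitive on ordered pairs of complementary $k$-dimensional subspaces, the number $\tau$ of spreads of $V$ through $T$ and a fixed complement $S$ of $T$ is independent of $S$; writing $\sigma$ for the number of spreads through $T$ and counting incident (spread, member $\neq T$) pairs yields $\sigma q^k = \tau q^{k^2}$, because $T$ has exactly $q^{k^2}$ complements. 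Summing the number of nonnegative members $\neq T$ over all $\sigma$ spreads through $T$ is at least $\sigma$ and also equals $\tau$ times the number of nonnegative $k$-dimensional subspaces disjoint from $T$; hence that number is at least $\sigma/\tau = q^{k(k-1)}$. One finishes with $q^{k(k-1)} \geq (1-\tfrac{2}{q})\gauss{2k-1}{k-1}$ for $q\geq 3$, which follows by bounding the telescoping product $\gauss{2k-1}{k-1}/q^{k(k-1)} = \prod_{i=1}^{k-1}\frac{1-q^{i-2k}}{1-q^{-i}}$ exactly as in Lemma~\ref{lem_stupid_upper_bnd_gauss}; for $q=2$ the asserted bound is trivial.

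For $0<\delta<k$ the space $V$ has no spread, and this is where Beutelspacher's theorem enters: it provides a partial spread $\Pi$ of $V$ through $T$ of the maximum size $q^{k+\delta}+1$, whose uncovered $1$-dimensional subspaces form a set $H_\Pi$ of $\frac{q^k(q^\delta-1)}{q-1}$ points, all disjoint from $T$. Now $\sum_{S\in\Pi,\,S\neq T} b_S = -b_T - f(H_\Pi)$, where $f(H_\Pi)$ is the total weight of these uncovered points, again using $\sum_{P\in\scrP} f(P) = 0$. The idea is to run the same averaging over all maximal partial spreads of $V$ through $T$: the stabilizer of $T$ in $\mathrm{GL}(V)$ is transitive on the $1$-dimensional subspaces outside $T$, so each of them is uncovered by the same number of these partial spreads, which makes $\sum_\Pi f(H_\Pi)$ an explicit positive multiple of $-b_T$ of relative size roughly $q^{-k}$. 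A partial spread with $f(H_\Pi) < -b_T$ still has a nonnegative member $\neq T$, so if only a small fraction of the maximal partial spreads violate this, the double count of $(\Pi,\text{member}\neq T)$ pairs — in which each nonnegative $k$-dimensional subspace disjoint from $T$ occurs the same number $\mu$ of times, with $\Sigma/\mu = q^{k^2-k-\delta}\gauss{k+\delta}{k}$, of the same degree in $q$ as $\gauss{n-1}{k-1}$ — yields a lower bound $(1-O(q^{-1}))\,q^{k^2-k-\delta}\gauss{k+\delta}{k} \geq (1-\tfrac{2}{q})\gauss{n-1}{k-1}$.

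The main obstacle is precisely the step requiring that only a small fraction of the maximal partial spreads through $T$ are ``bad''. Unlike a genuine spread, a partial spread through $T$ need not contain a nonnegative member other than $T$ once $f(H_\Pi)\geq -b_T$, so the clean ``at least one per spread'' step is lost, and knowing $\sum_\Pi f(H_\Pi)$ is not by itself enough for a Markov-type bound on the number of bad partial spreads, since the sets $H_\Pi$ may carry negative weight; what is needed is an a priori control of $\sum_\Pi \max(f(H_\Pi),0)$, i.e.\ of the positive part of $f$ on the uncovered points, in terms of $-b_T$. I expect this to be handled either by exploiting that in Beutelspacher's construction the uncovered points lie inside a subspace of dimension $k+\delta$ (so that $f(H_\Pi)$ is controlled by weights of larger subspaces, to which the eigenvalue estimates of Section~4 apply), or by first separately disposing of the degenerate case in which some $1$-dimensional subspace carries an unusually large weight (where the $\gauss{n-1}{k-1}$ $k$-dimensional subspaces through it already provide enough nonnegative subspaces disjoint from $T$). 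Pinning the constant down to precisely $1-2/q$, uniformly in $q$ and not merely $1-O(1)/q$, will once more lean on the sharp Gaussian-coefficient estimates of Section~2.
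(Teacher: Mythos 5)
Your $\delta=0$ case is complete and correct: every $k$-spread of $V$ through $T$ has weight sum $0$ and hence a nonnegative member other than $T$, and your transitivity/double-count gives at least $q^{k(k-1)}\geq(1-\tfrac{2}{q})\gauss{n-1}{k-1}$ nonnegative complements of $T$ (the last inequality is exactly Lemma~\ref{lem_bnd_nk} with $a=k$, which needs no hypothesis on $q$). Unfortunately that is precisely the case the paper does not need: in the proof of Theorem~\ref{thm_main} the lemma is invoked for $2k<n<3k$, i.e.\ for $1\leq\delta<k$, and there your argument has the gap you yourself flag. Knowing $\sum_\Pi f(H_\Pi)=-\lambda b_T>0$ over all maximal partial spreads $\Pi$ through $T$ gives no upper bound on the number of $\Pi$ with $f(H_\Pi)\geq -b_T$, because the summands may be negative: a handful of hole sets carrying a huge positive weight is compatible with almost every $\Pi$ being ``bad'' and contributing no nonnegative member besides $T$. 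The two repairs you sketch (controlling the positive part of $f$ on the hole sets via the $(k+\delta)$-dimensional subspace containing them, or first disposing of points of unusually large weight) are left as speculation, and neither is routine --- the second in particular threatens to presuppose a statement of the same strength as the one being proved. As written, the proof establishes the lemma only for $n=2k$.

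It is also worth noting that the paper's route does not average over partial spreads at this step at all. It imports the proof of Chowdhury--Sarkis--Shahriari's Lemma~4.5 wholesale and changes a single inequality: the family $\scrF$ of nonnegative $k$-dimensional subspaces disjoint from $T$ produced there is bounded below by the explicit count $q^{(k+\delta)(k-1)}\gauss{n-k-\delta-1}{k-1}=q^{(n-k)(k-1)}$, which Lemma~\ref{lem_bnd_nk} with $a=n-k$ converts directly into $(1-\tfrac{2}{q})\gauss{n-1}{k-1}$. So the quantitative passage from ``one good subspace per spread'' to ``$(1-\tfrac{2}{q})\gauss{n-1}{k-1}$ good subspaces'' is achieved there by a constructive count rather than by the Markov-type averaging your plan requires; if you want to complete your argument independently, that missing control of the positive part of $f$ on the hole sets is the one thing you must supply.
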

\begin{proof}
  The proof is as in \cite[Lemma 4.5]{Chowdhury2013} with the exception of 
  \cite[Equation (4.51)]{Chowdhury2013}. By Lemma \ref{lem_bnd_nk}, we have for $n = 2k+\delta$
  \begin{align*}
   |\scrF| \geq q^{(k+\delta)(k-1)} \gauss{n-k-\delta-1}{k-1} &= q^{(k+\delta)(k-1)} \geq \left( 1 - \frac{2}{q} \right) \gauss{n-1}{k-1}.
  \end{align*}
  which yields the assertion.
\end{proof}

\section{Proof of Theorem \ref{thm_main}}

\begin{proof}[Proof of Theorem \ref{thm_main}]
  We may assume that there are at most $\gauss{n-1}{k-1}$ nonnegative $k$-dimensional
  subspaces. We will also assume $2k < n < 3k$, 
  since the remaining cases are covered in \cite[Theorem 1.3]{Chowdhury2013} and \cite{Manickam1988a}.
  Also notice that the theorem reuqires $x \geq 2$.
  
  If there exists a $1$-dimensional subspace $P$ which is contained
  in $\gauss{n-1}{k-1}$ nonnegative $k$-dimensional subspaces, then we are done.
  Therefore, we can suppose that all $1$-dimensional subspaces are contained in at least one
  $k$-dimensional subspace with negative weight. 
  
  Suppose there exists a $1$-dimensional subspace $P$ which is contained in more than
  $\frac{2}{q} \gauss{n-1}{k-1}$ nonnegative $k$-dimensional subspaces. There 
  exists a negative $k$-dimensional subspace $T$ on $P$, so there are at least
  \begin{align*}
    \left( 1- \frac{2}{q} \right) \gauss{n-1}{k-1}
  \end{align*}
  nonnegative $k$-dimensional subspaces not on $P$ by Lemma \ref{lem_imp_section4_lemma}.
  Then there are more than $\gauss{n-1}{k-1}+1$ nonnegative $k$-dimensional subspaces
  which contradicts our assumption.
  
  Therefore no $1$-dimensional subspace is contained in more than 
  $\frac{2}{q} \gauss{n-1}{k-1}$ nonnegative $k$-dimensional subspaces.
  Let $A$ denote a heighest weight $k$-dimensional subspace of $V$.
  Let $C_i$ denote the $i$-th highest weight $k$-dimensional subspace of $V$ such 
  that $\dim(A \cap C_i) = 1$. 
  By Lemma \ref{lem_there_exists_bad_configuration}, there exists a bad configuration
  in $\{ A \} \cup \{ C_i: i \leq \frac{(x-2)! \cdot 2^{x+2}-3}{q} \gauss{n-1}{k-1} + 1\}$
  with $x$ elements.
  Hence, we can apply Lemma \ref{lem_all_M_intersection_bnd} (b) with 
  \begin{align*}
    c = (x-2)! \cdot 2^{x+1}
  \end{align*}
  which shows that we find a $1$-dimensional subspace that is a subspace of at least
  \begin{align}
    \left( 1 - \frac{(x-1)! \cdot 2^{x+1}}{q} - \frac{3x}{q^{n-2k+1}} 
    - x^2 \cdot 2^{x} \cdot q^{-\delta} \right)
    \frac{\gauss{n-1}{k-1}}{\binom{x}{2}}\label{eq_viele_viele_subs}
  \end{align}
  nonnegative $k$-dimensional subspaces where $\delta = 2$ in Case (a), and 
  $\delta = 1$ in Case (b).
  
  If $\delta = 2$, then the assumptions $q \geq (x-1)! \cdot 2^{x+2}$,
  $(x-1) n \geq (2x-1) k - x + 2$ (particularly, $x > 1$), and $n \geq 2k+2$ imply
  \begin{align*}
    &\frac{(x-1)! \cdot 2^{x+1}}{q} \leq \frac{1}{2},\\
    &\frac{3x}{q^{n-2k+1}} \leq \frac{3x}{q^3} \leq \frac{3x}{2^{3(x+2)}} \leq \frac{3}{256},\\
    &x^2 \cdot 2^{x} \cdot q^{-\delta} \leq \frac{x^2 \cdot 2^x}{\left(2^{x+2}\right)^2} \leq \frac{x^2}{2^{x+4}} \leq \frac{1}{8},\\
    &\frac{q}{\binom{x}{2}} \geq \frac{ (x-1)! \cdot 2^{x+3}}{x(x-1)} \geq 16,
  \end{align*}
  so \eqref{eq_viele_viele_subs} is at least
  \begin{align*}
    \left( 1 - \frac{1}{2} - \frac{3}{256} - \frac{1}{8} \right) \frac{\gauss{n-1}{k-1}}{\binom{x}{2}}
    &= \frac{93}{256 \binom{x}{2}} \gauss{n-1}{k-1}\\
    &\geq \frac{93}{16q} \gauss{n-1}{k-1} > \frac{2}{q} \gauss{n-1}{k-1}.
  \end{align*}
  This contradicts our assumption that no $1$-dimensional subspace $P$ is contained
  in more than $\frac{2}{q} \gauss{n-1}{k-1}$ nonnegative $k$-dimensional subspaces.
  Hence, Part (a) of the theorem follows.
  
  If $\delta = 1$, then the assumptions $q \geq (x-1)! \cdot 2^{2x+1}$,
  $(x-1) n \geq (2x-1) k - x + 1$, and $n \geq 2k+1$ imply Part (b) of
  the theorem with similar calculations. Here we have
  \begin{align*}
    &\frac{(x-1)! \cdot 2^{x+1}}{q} \leq \frac{1}{4},\\
    &\frac{3x}{q^{n-2k+1}} \leq \frac{3x}{q^3} \leq \frac{3x}{2^{3(2x+1)}} \leq \frac{3}{1024},\\
    &x^2 \cdot 2^{x} \cdot q^{-\delta} \leq \frac{x^2 \cdot 2^x}{2^{2x+1}} \leq \frac{x^2}{2^{x+1}} \leq \frac{9}{16},\\
    &\frac{q}{\binom{x}{2}} \geq \frac{ (x-1)! \cdot 2^{2x+2}}{x(x-1)} \geq 32.
  \end{align*}
  Then \eqref{eq_viele_viele_subs} is at least
  \begin{align*}
    \left( 1 - \frac{1}{4} - \frac{3}{1024} - \frac{9}{16} \right) \frac{\gauss{n-1}{k-1}}{\binom{x}{2}}
    &= \frac{189}{1024 \binom{x}{2}} \gauss{n-1}{k-1}\\
    &\geq \frac{189}{32} \gauss{n-1}{k-1} > \frac{2}{q} \gauss{n-1}{k-1}.
  \end{align*}
\end{proof}

\section{Duality}

For the sake of completeness we also mention the following simple exercise.

\begin{lemma}\label{prop_dual}
  Let $n \geq 2k$.
  If there are at least $\alpha$
  $(n-k)$-dimensional subspaces with nonnegative weight, then there are at least
  $\alpha$ $k$-dimensional subspaces with nonnegative weight.
  Furthermore, the set of $k$-dimensional subspaces with nonnegative weights
  is isomorphic to a dual of the set of nonnegative $(n-k)$-dimensional subspaces.
\end{lemma}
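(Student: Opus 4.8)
The plan is to realize the $(n-k)$-dimensional problem as the $k$-dimensional problem in the dual space, after an appropriate transfer of the weight function. First I would fix a nondegenerate symmetric bilinear form on $V$ and let $U \mapsto U^{\perp}$ be the induced bijection of the subspace lattice of $V$ onto itself; it reverses dimension and inclusion, and in particular it carries the $1$-dimensional subspaces bijectively onto the hyperplanes. Using this, define a new weighting $g \colon \scrP \to \bbR$ by $g(P) := b_{P^{\perp}}$, the weight of the hyperplane $P^{\perp}$. Then
\[
  \sum_{P \in \scrP} g(P) \;=\; \sum_{H} b_H \;=\; \gauss{n-1}{1} \sum_{P \in \scrP} f(P) \;=\; 0,
\]
where $H$ runs over the hyperplanes of $V$ and we used that every $1$-dimensional subspace lies in exactly $\gauss{n-1}{1}$ of them; so $g$ is again an admissible weighting.

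The computational core is the identity $\sum_{H \supseteq U} b_H = q^{\,n-d-1}\, b_U$ for every $d$-dimensional subspace $U$, the sum taken over the hyperplanes $H$ containing $U$. I would prove it by expanding $b_H = \sum_{P \subseteq H} f(P)$ and switching the order of summation: a $1$-dimensional subspace $P$ is counted once for each hyperplane through $\langle U, P\rangle$, and there are $\gauss{n-d}{1}$ such hyperplanes if $P \subseteq U$ and $\gauss{n-d-1}{1}$ otherwise; since $\gauss{n-d}{1} - \gauss{n-d-1}{1} = q^{\,n-d-1}$ and $\sum_{P} f(P) = 0$, the claim follows. Applying this to a $k$-dimensional subspace $S$ with $U = S^{\perp}$ (which has dimension $n-k$), and using that $P \mapsto P^{\perp}$ sends the $1$-dimensional subspaces of $S$ bijectively onto the hyperplanes containing $S^{\perp}$, I get that the $g$-weight of $S$ equals $\sum_{H \supseteq S^{\perp}} b_H = q^{\,k-1}\, b_{S^{\perp}}$. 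As $q^{\,k-1} > 0$, the map $S \mapsto S^{\perp}$ restricts to a bijection between the $k$-dimensional subspaces of nonnegative $g$-weight and the $(n-k)$-dimensional subspaces of nonnegative $f$-weight; hence there are at least $\alpha$ $k$-dimensional subspaces of nonnegative weight. Since $\perp$ also turns ``contains a given $1$-dimensional subspace'' into ``is contained in a given hyperplane'', this bijection is exactly a duality, which gives the ``furthermore'' clause.

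I do not expect a real obstacle here --- the lemma is genuinely routine --- and the only point requiring care is the bookkeeping of the duality: the weighting that witnesses the $k$-dimensional count is $g$, not $f$, and the incidence in the statement (a $1$-dimensional subspace inside a $k$-dimensional one) is matched by $\perp$ with containment in a hyperplane on the $(n-k)$-dimensional side. One can avoid choosing a form altogether by working in the dual vector space $V^{*}$ with the weighting $\langle \lambda \rangle \mapsto b_{\ker \lambda}$; the computation is verbatim the same. Finally, the hypothesis $n \geq 2k$ plays no role in the argument and is only retained for consistency with the rest of the paper.
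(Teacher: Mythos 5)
Your proposal is correct and follows essentially the same route as the paper: the core step in both is the double-counting identity $\sum_{H \supseteq U} b_H = q^{n-\dim U-1}\, b_U$ (obtained from $\gaussm{n-d}-\gaussm{n-d-1}=q^{n-d-1}$ and $\sum_P f(P)=0$), followed by passing to the dual space with the induced hyperplane weighting. The only cosmetic difference is that you pull the duality back to $V$ via a fixed polarity rather than working in $V^{*}$ directly, and you are somewhat more explicit than the paper about the (harmless) point that the witnessing weighting for the $k$-dimensional count is $g$ rather than $f$.
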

\begin{proof}
  Let $\scrH$ be the set of hyperplanes of $V$. Define the weight function 
  $g: \scrH \rightarrow \bbR$ by $g(H) = \sum_{P \in H} f(P)$. Define the
  $g$-weight of a $k$-dimensional subspace $U$ by $g(U) = \sum_{U \subseteq H} g(H)$.
  Furthermore by $\sum_{P \in \scrP} f(P) = 0$,
  \begin{align*}
    g(U) &= \sum_{U \subseteq H} g(H)\\
    &= \sum_{U \subseteq H} \sum_{P \in H} f(P)\\
    &= \sum_{U \subseteq H} \left(\sum_{P \in H \cap U} f(P)\right) + \left(\sum_{P \in H \setminus U} f(P)\right)\\
    &= \sum_{P \in \scrP \cap U} (\gaussm{n-k} f(P) + \sum_{P \in \scrP \setminus U} \gaussm{n-k-1} f(P))\\
    &= q^{n-k-1} f(U).
  \end{align*}
  Hence, we can consider the problem in the dual vector space of $V$ (which is isomorphic to $V$)
  with $g$ as the weight function on points (of the dual space).
  Then the assertion is obvious.
\end{proof}

This shows that one only has to investigate the MMS problem for $n \geq 2k$: If $n < 2k$,
then $n > n + (n-2k) = 2(n-k)$.

\section{Some Concluding Remarks}

The used argument is based on the observation that the only set of nonnegative $k$-dimensional
subspaces which reaches the bound $\gauss{n-1}{k-1}$ seems to be the set of all generators
on a fixed $1$-dimensional subspace. This can no longer work for $n=2k$, since
one can construct another example of that size as follows.
Fix a $(2k-1)$-dimensional subspace $S$, put the weight $-1$ on all
$1$-dimensional subspaces not in $S$ and the weight $q^{2k-1}/\gaussm{2k-1}$ 
on all $1$-dimensional subspaces in $S$.
Then exactly the $\gauss{n-1}{k-1}$ $k$-dimensional subspaces in $S$ are the 
nonnegative ones, so this is a second example. This is in fact the only other example
in this case (see below).

Conjecture \ref{conj_mms_vs} is wrong for $k < n < 2k$ as one can see by a similar 
example which we obtain by duality: Fix a $(n-1)$-dimensional subspace $S$, put the weight $-1$ on all 
$1$-dimensional subspaces not in $S$, put the weight $q^{n-1}/\gaussm{n-1}$ on
all $1$-dimensional subspaces in $S$. Then the nonnegative $k$-dimensional subspaces
are exactly the $k$-dimensional subspaces in $S$. There are $\gauss{n-1}{k}$ such
subspaces, so $\gauss{n-1}{k} < \gauss{n-1}{k-1}$ for $k < n < 2k$ shows that
Conjecture \ref{conj_mms_vs} does not hold in this range.

As the cases $n < 2k$ are covered by Lemma \ref{prop_dual}, it seems to be reasonable
to conjecture the following.
\begin{enumerate}[(a)]
 \item for $k < n < 2k$, the minimum number of nonnegative $k$-dimensional
subspaces is $\gauss{n-1}{k}$ with equality for the example given above,
\item for $n = 2k$, the minimum number of nonnegative $k$-dimensional subspaces 
 is $\gauss{n-1}{k-1}$ with equality for the two given example, i.e. either all
 nonnegative $k$-dimensional subspaces contain a fixed $1$-dimensional subspace
 or all nonnegative $k$-dimensional subspaces are contained in a fixed 
 $(n-1)$-dimensional subspace,
 \item for $n > 2k$, the minimum number of nonnegative $k$-dimensional subspaces 
 is $\gauss{n-1}{k-1}$ with equality if and only if all
 nonnegative $k$-dimensional subspaces contain a fixed $1$-dimensional subspace.
\end{enumerate}
Notice that (b) is implied by the proof of \cite[Theorem 3.1]{Manickam1988a} and
the classification of all Erd\H{o}s-Ko-Rado sets of size $\gauss{n-1}{k-1}$.
Ameera Chowdhury remarked\footnote{Private communication.}
that this conjectures is the canonical generalization of a
conjecture on the MMS problem for sets given in \cite{Aydinian2012, Blinovsky2014} which was confirmed
for small cases in \cite{Hartke2014}. Additionally, the author did a (non-exhaustive) computer search for weightings with
a minimum number of nonnegative $k$-dimensional subspaces which support the stated conjecture on vector spaces.

\section*{Acknowledgment} The author would like to thank Ameera Chowdhury 
for her many very helpful remarks, Simeon Ball for telling him about the problem, 
and Klaus Metsch for his careful proofreading of the mathematical content.

\bibliographystyle{plain}

\end{document}